\newtheorem{teo}{Theorem}
\newtheorem{lemma}[teo]{Lemma}
\newtheorem{coro}[teo]{Corollary}
\newtheorem{propo}[teo]{Proposition}
\newcommand{\R}{\mathbb{R}}
\newcommand{\ii}{\mathrm{i}}
\newcommand{\eps}{\epsilon}
\newcommand{\N}{\mathbb{N}}
\newcommand{\C}{\mathbb{C}}
\newcommand{\CP}{\mathbb{C}\textrm{P}}
\newcommand{\RP}{\mathbb{R}\mathrm{P}}
\newcommand{\Z}{\mathbb{Z}}
\newcommand{\Sym}{\textrm{Sym}(n,\R)}
\newcommand{\Q}{\mathcal{Q}}
\newcommand{\PP}{\mathbb{P}}
\newcommand{\E}{\mathbb{E}}
\theoremstyle{remark} 
\newtheorem{remark}[]{Remark}
\title{Random matrices and the average topology of the intersection of two quadrics}
\author{ A. Lerario}
\thanks{Department of Mathematics, Purdue University}
\begin{document}

\maketitle

\begin{abstract}
Let $X_\R$ be the zero locus in $\RP^n$ of one or two independently and Weyl distributed random real quadratic forms.  Denoting by $X_\C$ the complex part in $\CP^n$ of $X_\R$ and by $b(X_\R)$ and $b(X_\C)$ the sums of their Betti numbers, we prove that: \begin{equation}\label{ex}\lim_{n\to\infty}\frac{\mathbb{E}b(X_\R)}{n}=1.\end{equation}
In particular for one quadric hypersurface asymptotically Smith's inequality $b(X_\R)\leq b(X_\C)$ is expected to be sharp. The methods we use combine Random Matrix Theory, Integral Geometry and spectral sequences.

\end{abstract}
\section{Introduction}
Let us consider the real vector space $W_{n,d}$ of real homogeneous polynomials of degree $d$ and $n+1$ variables. Each $f\in W_{n,d}$ defines a complex algebraic set $X_{\C}$ in $\CP^n$ and for an open dense subset of $W_{n,d}$ all these algebraic sets have the same volume (induced from the Fubiny-Study one) and the same topology. The first statement directly follows from Wirtinger's formula and the second essentially from the fact that the set of degenerate polynomials has real codimension two in the space of polynomials with complex coefficients.  If we look at the zero locus $X_\R$ of $f$ in the real projective space $\RP^n$, then the situation dramatically changes. The set of degenerate polynomials has now real codimension one and as we cross it the topology of $X_\R$ (and its volume) may change. It is however possible to compare the volume and the topology of the real and complex parts by mean of the following inequalities:
\begin{equation}\label{volsmith}\frac{\textrm{Vol}(X_\R)}{\textrm{Vol}(\RP^{n-1})}\leq \frac{\textrm{Vol}(X_\C)}{{\textrm{Vol}(\CP^{n-1})}}\quad \textrm{and}\quad b(X_\R)\leq b(X_\C).\end{equation}
The l.h.s. inequality directly follows from the integral geometry formula and the r.h.s. is the so called Smith's inequality and involves the sum of the Betti numbers (in this paper all cohomology groups and related ranks are assumed to be with $\Z_2$ coefficients).\\
This raises the question: if $f$ is picked up randomly, what do we expect the volume and the topology of its real zero locus to be? Clearly this question does not make sense for the complex part, since for any reasonable distribution of probability on $W_{n,d}$ the volume and the sum of the Betti numbers of $X_\C$ are constant functions outside of a zero probability set.\\
To answer this question let us consider the example of a random polynomial $f$ in $W_{1,d}$. In this case both the volume and the sum of the Betti numbers of $X_{\R}$ equal the number of real (projective) roots of $f$. In the seminal paper \cite{Kac} Kac proved that if the coefficients of $f$ are distributed as standard independent gaussians with mean zero and variance one, then the expected number of real roots $E_d$ of $f$ satisfies:
$$\lim_{d\to \infty} \frac{E_d}{\log d}=\frac{2}{\pi}.$$
In this paper we will assume $f=\sum f_\alpha x^\alpha$, where $x^\alpha=x_0^{\alpha_0}\cdots x_n^{\alpha_n}$ and the $f_\alpha$ are Gaussian independently distributed random variables with mean zero and variance $\frac{d! }{ \alpha_0!\cdots \alpha_n!}$. The resulting distribution of probability on $W_{n,1}$ is called the \emph{Weyl} distribution (or the Kostlan distribution). For instance the expected number of zeroes $E_d$ of a random Weyl distributed polynomial in $W_{1,d}$ is given exactly by:
$$E_d=\sqrt{d}.$$
The reader is referred to the paper \cite{EdKo} for a proof of both these limits and a survey of related results.\\
More generally the expected volume of the random algebraic variety $X_\R$ defined by a set of polynomials $f_1, \ldots, f_k$ with each $f_i\in W_{n,d_i}$ Weyl and independently distributed is given by:
$$\mathbb{E}[\textrm{Vol}(X_\R)]=\sqrt{d_1\cdots d_k}\textrm{Vol}(\RP^{n-k}).$$
Indeed the previous formula was proved in a sequence of papers of Shub and Smale first \cite{ShSm} and  Bürgisser \cite{Bu} in this general form. In this last paper the formula follows from the more striking result on the computation of the expected curvature polynomial of $X_\R$ in $\RP^n$. The same computation also gives a precise formula for the expected Euler characteristic of $X_\R$ (the hypersurface case was already done by Podkorytov in \cite{Po}). In the case of $n$ equations in $\RP^n$ this expected volume gives the expected number of solutions of a random polynomial system.\\
Concerning the sum of the Betti numbers of $X_\R$, very little is known. Even the case of the expected number of components of a random real, Weyl distributed curve of degree $d$ in $\RP^2$ is not known. Gayet and Welschinger in \cite{GaWe1} proved that maximal curves, i.e. those with approximatively $d^2$ components, become exponentially rare in the degree. The same authors in \cite{GaWe2} proved that the expected total Betti number of a random 
Weyl distributed hypersurface of degree $d$ in $\RP^n$ satisfies the following:
$$\lim_{d\to \infty}\mathbb{E}\bigg[\frac{b(X_\R)}{d^n}\bigg]=0.$$
In an unpublished letter \cite{Sarnak} Sarnak claims that in the case of a plane curve we have even $\lim_{d\to\infty}\mathbb{E}\big[\frac{b(X)}{d}\big]\leq c_1$, for a positive constant $c_1.$ Indeed in a different direction Nazarov and Sodin \cite{NaSo} proved that the expected number of connected components of a random\footnote{The distribution of probability here is such that the components of the spherical harmonic with respect to the $L^2_{S^2}$ orthonormal basis are i.i.n. distributed.} spherical harmonic of degree $d$ is asymptotically $c_2d^2$, for some $c_2>0.$ Generalizing this result, the author together with E. Lundberg, was able to prove that the expectation of the number of connected components of a random\footnote{Here the probability distribution is a real analogue of the Weyl one, suggested by P. Sarnak in \cite{Sarnak}.} hypersurface of degree $d$ in $\RP^n$ is asymptotically of order $d^n$ (see \cite{LeLu}).\\
In this paper we study the case the random algebraic set is the intersection of real quadrics in $\RP^n$. In this case Barvinok's bound (see \cite{Ba}) gives for the intersection $X_\R$ of $k$ quadrics in $\RP^n$:
$$b(X_\R)\leq n^{O(k)}.$$  
This bound suggests that the measure of the complexity of $X_\R$ is the number $k$ of quadrics we are intersecting. Motivated by this and Smith's inequality (\ref{volsmith}) we thus  focus on a different asymptotics, namely we fix the number of equations, i.e. the codimension of $X_\R$, and we let the number of variables go to infinity. The case we study is somehow the simplest, i.e. the one when $X_\R$ is defined by one or two random Weyl independent quadratic equations, but offers some new perspectives. More specifically we prove that if $X_\R$ is the intersection of one or two independently and Weyl distributed quadrics then:
\begin{equation}\label{exp}\lim_{n\to\infty}\frac{ \mathbb{E} b(X_\R)}{n}=1.\end{equation}
Thus as we increase the number of variables, Smith's inequality for one quadric hypersurface is expected to be sharp.\\
The key fact here is that given a quadratic form $q$ on $\R^n$ we can associate to it a symmetric matrix $Q$ of order $n$ (using a scalar product) and the form $q$ is Weyl distributed if and only if $Q$ is in the \emph{Gaussian Orthogonal ensemble}. This simple observation allows to introduce the language of Random Matrix Theory into the problem. For the case of one quadric hypersurface it is then enough to study the expectation of the signature of $Q$, which characterizes the topology of the zero locus of $q$.\\ 
For the case of the intersection of two quadric hypersurfaces, the idea for proving these limits is to relate the sum of the Betti numbers of $X_\R$ to that of its spectral variety, namely the intersection in the space of all quadratic forms of the linear system defining $X_\R$ with the set of singular quadrics. This is made rigorous by the introduction of a spectral sequence from \cite{AgLe} to compute the cohomology of the intersection of real quadrics. This kind of duality between the variables and the quadratic equations is the same that allows to prove Barvinok's bound.\\
In the case of the intersection of three random quadrics in $\RP^n$, the spectral variety is a random curve, but its distribution of probability is fairly different from the Weyl or the standard one. This random curve is smooth with probability one and its topological complexity is essentially the topological complexity of $X_\R$ (see \cite{Le3}).\\
The paper is organized as follows: in Section 2 we introduce some notation and review some notions from integral geometry and in Section 3 we discuss the technique from \cite{AgLe} to study the intersection of real quadrics, focusing on the case of one and two quadrics. In Section 4 we prove the limit (\ref{exp}) for one quadric; this is obtained by a combination of a formula for the cohomology of one single quadric and Wigner's semicircular law. In Section 5 we consider the case of two quadrics: here the result follows again from a formula for the cohomology derived from Section 3. This formula involves the number of singular quadrics in the linear system defining $X_\R$ and the maximum of the inertia index of the quadrics belonging to this linear system; both the expectation of these numbers are computed using the integral geometry formula. As a byproduct we compute the intrinsic volume in the Frobenius norm of the set $\Sigma$ of singular symmetric matrices of norm one; this computation is related to some limit of gap probabilities in the GOE and the theory of Painlevé equations. Finally in the Appendix we compute the expected value of the rank of the second differential of the spectral sequence from Section 3.

\section*{Acknowledgements}
The author is grateful to Sofia Cazzaniga, who implemented numerical simulations to verify the obtained results, and to Erik Lundberg for useful discussions.

\section{Random quadratic forms and integral geometry}

Let $q=\sum c_{ij}x_ix_j$ be a real quadratic form whose coefficients $c_{ij}$ are independent Gaussian random variables with mean zero and variance one for $i=j$ and two for $i\neq j$. The quadratic form $q$ is said to be a \emph{Weyl distributed} random polynomial. This results in a distribution of probability on the space $\Q(n)$ of real quadratic forms in $n$ variables; this distribution of probability is invariant by the action (by change of variables) of the orthogonal group $O(n).$ If $q$ is a random quadratic form Weyl distributed as above and $\R^{m}$ is a linear subspace of $\R^{n}$, then the restriction $q|_{\R^{m}}$ is again a random quadratic form Weyl distributed (see \cite{Bu}). Equivalently, once a scalar product has been fixed, it is possible to associate to each quadratic form $q$ a symmetric matrix $Q$ by the equation:
$$q(x)=\langle x, Qx\rangle,\quad \textrm{for all } x \in \R^{n}.$$
In this way a linear isomorphism between the space $\Q(n)$ of real homogeneous polynomials of degree two in $n$ variables and the space $\Sym$ of real symmetric matrices of order $n$ is set up; we denote by $N=\frac{1}{2}n(n+1)$ the dimension of this vector spaces. If $q$ is a random quadratic form Weyl distributed, the corresponding random matrix $Q$ is said to belong to the \emph{Gaussian Orthogonal Ensemble}. The entries of $Q$ are independent Gaussian random variables with mean zero and variance one on the diagonal and one-half off diagonal. If we define the norm of a matrix $Q$ in $\Sym$ by $$\|Q\|^2=\textrm{tr}(Q^2),$$ then the induced probability distribution is uniform on the unit sphere $S^{N-1}$; this distribution is thus invariant by the action of the group $O(N)$ of orthogonal transformation of $\Sym$. The map $\rho:O(n)\to O(N)$ given by
$$\rho(M)Q=MQM^{-1}$$
defines a homomorphism of group: in fact if $M$ is orthogonal, then $\textrm{tr}(MQM^{-1})=\textrm{tr}(Q);$ the induced action of $O(n)$ on $\Q(n)$ is by change of variables. Thus we see that on $\Sym$ there are two actions, one of $O(N)$ and one of $O(n)$, both by isometries.\\
If $X$ is a compact riemannian manifold of dimension $d$ we denote its Riemannian density by $\omega_X$ and we define its normalized volume $p(X)$ to be the number $\frac{\textrm{Vol}(X)}{\textrm{Vol}{(S^d)}}$. An important tool to study the volume of submanifolds of the sphere, or the projective space, is the so called \emph{integral geometry formula}. Let $A$ and $B$ be submanifolds, with or without boundaries, of the unit sphere $S^m$, of dimension respectively $a$ and $b$, with $a+b\geq m$. We endow the sphere $S^m$ with the standard Riemannian metric and $A$ and $B$ by the induced one. The integral geometry formula is:
$$\frac{\int_{SO(m+1)} p(A\cap gB) dg}{\textrm{Vol}(SO(m+1))}= p(A)p(B).$$
where the integral is with respect to the Haar measure. A similar formula holds in the case $A$ and $B$ are submanifolds of the projective space $\RP^m$; in this case the volumes are normalized by $\textrm{Vol}(\RP^m).$

\section{The cohomology of the intersection of real quadrics}
We recall in this section a general construction to study the topology of the intersection of real quadrics. If we are given quadratic forms $q_{1},\ldots, q_{k}$ on $\R^{n}$, then we can consider their common zero locus $X_\R$ in $\RP^{n-1}:$
$$X_\R=X_{\R}(q_{1},\ldots, q_{k})$$
To study the topology of $X_\R$ we consider the linear span $W$ of $\{q_1, \ldots, q_k\}$ in the vector space $\Q(n)$:
$$W=\textrm{span}\{q_1, \ldots, q_k\}$$ 
The arrangement of $W$ with respect to the subset of degenerate quadratic forms (those with at least one dimensional kernel) determines the topology of the base locus in the following way. The simplest invariant we can associate to a quadratic form $q$ is its positive inertia index $\ii^{+}(q)$, namely the maximal dimension of a subspace $V\subset \R^{n}$ such that $q|_{V}$ is positive definite. In a similar fashion we consider for $j\in \N$ the sets:
	$$\Omega^{j}=\{q\in W\backslash \{0\}\,|\, \ii^{+}(q)\geq j\}.$$ In this way we get a filtration $\Omega^{n}\subseteq\Omega^{n-1}\subseteq\cdots\subseteq \Omega^{1}\subseteq\Omega^{0}$ of $W\backslash\{0\}$ by open sets.
The following theorem is proved in \cite{AgLe}.
\begin{teo}\label{AgLe}There exists a cohomology spectral sequence of the first quadrant $(E_r, d_r)_{r\geq1}$ converging to $H^{n-1-*}(X_\R)$ such that
$$E_2^{i,j}=H^i(W, \Omega^{j+1}).$$
\end{teo}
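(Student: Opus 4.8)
The idea, going back to Agrachev, is to trade the variables $x$ for the quadratic forms $q\in W$ by means of an incidence variety --- the same variable--equation duality that underlies Barvinok's bound. After fixing a scalar product on $\R^{n}$, note that the \emph{sign} of $q(x)$ depends only on $q\in W\setminus\{0\}$ and $[x]\in\RP^{n-1}$, since $q(\lambda x)=\lambda^{2}q(x)$; this lets one form
$$\mathcal B=\{(q,[x])\in (W\setminus\{0\})\times\RP^{n-1}\ :\ q(x)>0\}$$
together with its two projections $p_{1}\colon\mathcal B\to W\setminus\{0\}$ and $p_{2}\colon\mathcal B\to\RP^{n-1}$.

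The projection $p_{2}$ governs the abutment. Its fibre over $[x]$ is $\{q\in W:q(x)>0\}$, an open half-space of $W$ (hence contractible) when $[x]\notin X_\R$ and empty when $[x]\in X_\R$; so $p_{2}$ is a fibre bundle with contractible fibres over $\RP^{n-1}\setminus X_\R$, whence $\mathcal B\simeq\RP^{n-1}\setminus X_\R$. By Poincar\'e--Alexander--Lefschetz duality in the $\Z_{2}$-orientable manifold $\RP^{n-1}$ one has $H^{n-1-p}(X_\R;\Z_{2})\cong H^{p}(\RP^{n-1},\RP^{n-1}\setminus X_\R;\Z_{2})$, so the relevant abutment is a \emph{relative} cohomology of the pair $(\RP^{n-1},\RP^{n-1}\setminus X_\R)$, modelled by a pair $(\bar{\mathcal B},\mathcal B)$ with $\bar{\mathcal B}$ a suitable compactification of $\mathcal B$; this is the origin of the shift ``$n-1-*$''. (Lifting to the double cover $Y\subset S^{n-1}$ and invoking the Gysin sequence of $Y\to X_\R$ is an equivalent route.)

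The projection $p_{1}$ produces the $E_{2}$-page. Diagonalizing $q$, its fibre $p_{1}^{-1}(q)=\{[x]\in\RP^{n-1}:q(x)>0\}$ deformation retracts onto the projectivization of the positive eigenspace of $q$, a copy of $\RP^{\ii^{+}(q)-1}$ --- the kernel directions contribute only a contractible affine bundle and do not change the homotopy type, uniformly in $q$. Since $H^{j}(\RP^{d-1};\Z_{2})=\Z_{2}$ exactly for $0\le j\le d-1$ and vanishes otherwise, and since $q\mapsto\ii^{+}(q)$ is lower semicontinuous, the $j$-th cohomology of the fibres ``turns on'' precisely over the open sets $\Omega^{j+1}$ of the filtration. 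Running the associated spectral sequence --- the Leray spectral sequence of $p_{1}$, equivalently the spectral sequence of the filtration of $\mathcal B$ by the subsets $p_{1}^{-1}(\Omega^{j})$ --- and organizing its terms through the cone structure of $W$ over $S(W)$ (each $\Omega^{j}$ being conical, a union of open rays) yields a first-quadrant spectral sequence $(E_{r},d_{r})_{r\ge1}$ with $$E_{2}^{i,j}=H^{i}(W,\Omega^{j+1}),$$ converging, by the previous step, to $H^{n-1-*}(X_\R)$.

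The main obstacle is the homological bookkeeping that makes the two sides match on the nose. On the $p_{1}$ side one must verify that along the discriminant --- where $\ii^{+}$ jumps --- the projective-space fibres degenerate in exactly the way that keeps the Leray sheaves, resp. the relative groups of consecutive filtration steps, as simple as stated (cleanest under a genericity hypothesis on $W$, harmless here since the $q_{i}$ are random). On the $p_{2}$ side one must align the cone/suspension shift hidden in $H^{i}(W,\Omega^{j+1})$ with the Alexander--Lefschetz shift so as to reach precisely $H^{n-1-*}(X_\R)$, with the right reduced and relative conventions at the two ends of the range. Once this is settled, naturality provides the full differential structure $(E_{r},d_{r})_{r\ge1}$; for $k=1$ the whole mechanism collapses to the elementary fact that a single quadric is governed topologically by its signature.
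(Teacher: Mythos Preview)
The paper does not prove this theorem; it is quoted from \cite{AgLe} (Agrachev--Lerario), so there is no in-paper argument to compare against. Your sketch is faithful to the strategy of that reference: build the incidence set $\mathcal B\subset (W\setminus\{0\})\times\RP^{n-1}$, use the projection to $\RP^{n-1}$ together with Alexander--Lefschetz duality to identify the abutment with $H^{n-1-*}(X_\R)$, and use the projection to $W\setminus\{0\}$ --- whose fibres retract to projective spaces of dimension $\ii^+(q)-1$ --- to obtain the $E_2$-page via the filtration by the $\Omega^{j}$. As you yourself flag, the substance lies in the homological bookkeeping (matching the cone shift implicit in $H^i(W,\Omega^{j+1})$ with the duality shift, and controlling the Leray data across the discriminantal strata); your outline is a correct roadmap rather than a self-contained proof, and for the details one must consult \cite{AgLe}.
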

Notice that each set $\Omega^{j+1}$ deformation retracts to an open subset of the unit sphere in $W$; because of this deformation retraction in the sequel we will always think at each set $\Omega^{j+1}$ as a subset of the unit sphere. From the previous theorem we immediately derive the following inequality:
$$b(X_\R)\leq n+\sum_{j\geq 1} b(\Omega^j)$$ In the low codimension cases, i.e. for $k=1,2,3$, the previous formula can be sharpened as following. The spectral variety of the linear system $W$ is defined to be the intersection of the set $\Sigma$ of degenerate forms of norms one (i.e. of symmetric matrices with zero determinant and Frobenius norm one) with $W$:
$$\Sigma_W=W \cap \Sigma.$$ Notice that by homogeneity of the determinant this definition does actually not depend on the norm on $\Q(n)$ (respectively $\textrm{Sym}(n,\R))$. For a generic choice of $q_1, \ldots, q_k$ the following properties are satisfied:  the vector space $W$ has dimension $k$; the intersection of $W$ with the set of degenerate quadratic forms $\Sigma$ is transversal to every of its strata (the stratification is given by the dimension of the kernel). Thus, generically, for $k=1$, i.e. one quadric, the spectral variety is empty, for $k=2$ consists of a finite number of points and for $k=3$ it is a smooth curve (this follows from the fact that the codimension of the singular locus of $\Sigma$ is at least three). These are the only cases in which we may assume the spectral variety is generically smooth. If we define the number $$\mu_W=\max \ii^+|_W,$$ then in the case $k=1$ generically we have \begin{equation}\label{one}b(X_\R)=2(n-\mu_W).\end{equation}
The reader is referred to \cite{AgLe}, Example 2 for this formula,  while for $k=2,3$ the following inequality holds: $b(X_\R)\leq 3n-4\mu_W+\frac{1}{2}b(\Sigma_W)$.

  \section{The case of one quadric hypersurface}
In this section we study the expected total Betti number of the zero locus $X_\R$ of one single quadric Weyl distributed. We start by recalling some results from random matrix theory. Let $Q$ be a random matrix in the Gaussian Orthogonal Ensemble (recall that this is equivalent to the corresponding quadratic form $q$ being Weyl distributed).
If $\lambda_1,\ldots, \lambda_n$ are the eigenvalues of $Q$, we define the \emph{empirical spectral distribution}
$$\tau_n=\frac{1}{n}\sum_{i=1}^{n}\delta_{\lambda_i/\sqrt{n}}.$$
Strictly speaking $\tau_n$ is a random variable in the space of the probability distributions over $\R$; what is relevant for us is that once we have a continuous, compactly supported function $\psi$ we can define the random variable $X_n(\psi)=\int_\R\psi d\tau_n$. Wigner's semicircular law concerns the limit of the expectation of such a random variable. Let $\tau_{sc}$ be the probability density on the real line
$$\tau_{sc}=\frac{1}{2\pi}(4- |x|^2)_+^{1/2}dx,$$
and for every $\psi$ continuous and compactly supported let $X_{sc}(\psi)$ be the number $\int_\R\psi d\tau_{sc}$. The following theorem was proved by Wigner (see \cite{W} for the original work and \cite{TT} for a modern exposition).
\begin{teo}[Wigner]\label{sc}
For every interval $A\subset \R$ 
$$\lim_{n\to\infty}\mathbb{E}\int_Ad\tau_n=\int_Ad\tau_{sc}.$$
Moreover for every $\psi$ continuos and compactly supported and $F$ continuous bounded 
$$\lim_{n\to\infty}\mathbb{E}F(X_n(\psi))=F(X_{sc}(\psi)).$$

\end{teo}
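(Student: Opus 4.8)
The plan is to establish the semicircular law by the classical method of moments and then to deduce the statement about $F(X_n(\psi))$ from concentration of measure. For each fixed $k\in\N$ I would compute the limit of $\mathbb{E}\int_\R x^k\,d\tau_n$; since $\int_\R x^k\,d\tau_n=\frac1n\sum_{i=1}^n(\lambda_i/\sqrt n)^k=n^{-1-k/2}\,\mathrm{tr}(Q^k)$, this reduces to
$$\lim_{n\to\infty}\frac{1}{n^{1+k/2}}\sum_{i_1,\ldots,i_k=1}^{n}\mathbb{E}\big[Q_{i_1i_2}Q_{i_2i_3}\cdots Q_{i_ki_1}\big].$$
The entries $Q_{ij}$ are centered and jointly Gaussian, the only nontrivial correlations being $Q_{ij}=Q_{ji}$, so by Wick's formula each summand is a sum over pair partitions of the $k$ factors. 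The standard bookkeeping---each pairing is encoded by a closed walk on the index set, the two factors in a pair must correspond to the same edge, and the number of free indices is maximal exactly when the resulting quotient graph is a tree, i.e.\ for the non-crossing pairings---shows that the odd moments vanish and that the $(2m)$-th moment converges to the $m$-th Catalan number $C_m=\frac{1}{m+1}\binom{2m}{m}$ (up to the scaling dictated by the GOE normalization fixed above).

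The numbers $C_m$ are precisely the even moments of $\tau_{sc}$, and the odd moments of $\tau_{sc}$ vanish by symmetry, so $\mathbb{E}\tau_n$ and $\tau_{sc}$ have the same moments; since $\tau_{sc}$ is compactly supported it is determined by its moments, and therefore the deterministic measures $\mathbb{E}\tau_n$ converge weakly to $\tau_{sc}$, i.e.\ $\mathbb{E}\int_\R\psi\,d\tau_n\to\int_\R\psi\,d\tau_{sc}=X_{sc}(\psi)$ for every bounded continuous $\psi$. Because $\tau_{sc}$ has a continuous density, hence no atoms, this weak convergence upgrades to pointwise convergence of the cumulative distribution functions, which gives $\mathbb{E}\int_A d\tau_n\to\int_A d\tau_{sc}$ for every interval $A$; this is the first assertion.

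For the second assertion I would combine the first with a concentration estimate for $X_n(\psi)=\int_\R\psi\,d\tau_n$. If $\psi$ is $L$-Lipschitz then, by the Hoffman--Wielandt inequality (the ordered-eigenvalue map is $1$-Lipschitz from $\mathrm{Sym}(n,\R)$, with the Frobenius norm, to $\R^n$ with the Euclidean metric), the function $Q\mapsto X_n(\psi)$ is Lipschitz with constant $O(L/n)$; Gaussian concentration of measure---with respect to the Frobenius norm, for which the GOE is exactly a standard Gaussian vector---then gives $\mathrm{Var}(X_n(\psi))=O(L^2/n^2)\to0$, so $X_n(\psi)-\mathbb{E}X_n(\psi)\to0$ in probability. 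A general continuous compactly supported $\psi$ is a uniform limit of Lipschitz functions, and $|X_n(\psi)-X_n(\psi')|\leq\|\psi-\psi'\|_\infty$ holds deterministically because each $\tau_n$ is a probability measure, so the same conclusion persists for every continuous compactly supported $\psi$. Combined with the previous paragraph this gives $X_n(\psi)\to X_{sc}(\psi)$ in probability; since $F$ is continuous, $F(X_n(\psi))\to F(X_{sc}(\psi))$ in probability, and since $F$ is bounded, convergence of the expectations follows.

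The main obstacle is the combinatorial core of the moment computation: one must verify carefully that the pairings contributing at the leading order $n^{1+k/2}$ are exactly the non-crossing ones (so that odd $k$, admitting no perfect matching, contributes nothing), and that every other pairing---crossing, or pairing non-consecutive factors---gives a strictly smaller power of $n$ and so disappears in the limit. The remaining steps are soft. As an alternative to the method of moments one could run the Stieltjes transform argument, showing via a Schur-complement self-consistent equation that $\mathbb{E}\,\frac1n\mathrm{tr}\big((Q/\sqrt n-zI)^{-1}\big)$ converges to the root in the upper half-plane of $s(z)^2+zs(z)+1=0$, which is the Stieltjes transform of $\tau_{sc}$, and then inverting.
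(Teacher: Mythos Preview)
The paper does not supply a proof of this theorem: it is quoted as a classical result of Wigner, with a pointer to \cite{W} and to Tao's book \cite{TT} for a modern treatment. So there is no ``paper's own proof'' to compare against.

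Your sketch is the standard textbook argument and is essentially correct. The moment computation via Wick's formula and the identification of the leading pairings with non-crossing ones (hence Catalan numbers) is exactly Wigner's original approach; the upgrade from weak convergence to convergence on intervals via atomlessness of $\tau_{sc}$ is routine. For the second assertion, your concentration step is clean: the observation that the GOE, in Frobenius-orthonormal coordinates $(Q_{ii},\sqrt{2}\,Q_{ij})_{i<j}$, is a standard Gaussian vector is correct, and combined with Hoffman--Wielandt it gives the Lipschitz constant $O(L/n)$ for $Q\mapsto X_n(\psi)$ and hence $\mathrm{Var}\,X_n(\psi)=O(n^{-2})$. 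The passage from Lipschitz $\psi$ to general $\psi\in C_c(\R)$ and the bounded-convergence argument for $F$ are fine.

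Two small remarks. First, your hedge ``up to the scaling dictated by the GOE normalization fixed above'' is prudent: with the paper's convention (off-diagonal variance $1/2$) the limiting second moment of $\tau_n$ is $1/2$, not $1$, so one should be careful that the stated density $\frac{1}{2\pi}(4-x^2)_+^{1/2}$ matches; in any case this is a normalization issue in the statement, not in your argument. Second, in the moment step you might want to say explicitly that the contribution of a pairing is $n^{c}$ with $c$ the number of vertices of the associated quotient graph, and that $c\le k/2+1$ with equality exactly in the planar (non-crossing) case; this is the one place where a reader would want to see the counting spelled out.
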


Using the previous theorem we can prove the following proposition.

\begin{propo}
Let $Q$ be a random symmetric matrix in the Gaussian Orthogonal Ensemble of dimension $n$. Let also $\mu_{n}=\max\{\ii^+, \ii^-\}$ and  $\nu_n=\min\{\ii^+,\ii^-\}.$ Then:
$$\lim_{n\to \infty}\frac{\mathbb{E}[\mu_n-\nu_n]}{n}=0.$$
\begin{proof}
First notice that $\mu_n(Q)=\mu_n(Q/\sqrt{n})$ and $\nu_n(Q)=\nu_n(Q/\sqrt{n})$, since the inertia index of a symmetric matrix is invariant by multiplication of a positive number. Thus we have the equality of random variables:
$$\frac{\mu_n-\nu_n}{n}=\frac{|\ii^+-\ii^-|}{n}=\bigg|\int_\R Hd\tau_n\bigg|,$$
where $H(x)=\textrm{sign}(x)$; the first equality follows directly from the definition and the second comes from: $$\int_\R Hd\tau_n=\int_{(0,\infty)} d\tau_n-\int_{(-\infty,0)}d\tau_n=\frac{\ii^+-\ii^-}{n},$$ (notice that since the set of symmetric matrices with determinant zero is  the complement of a full measure set, we can discard the term $\int_{\{0\}}d\tau_n$). For every $\eps>0$ let us now consider a continuous, compactly supported function $\psi_\eps$ satisfying: $\psi_\eps$ is odd, $|\psi_\eps|\leq 1$ and $\psi_{\eps}(x)=H(x)$ for $x\in A(\eps)=(-3, \eps)\cup(\eps, 3)$. The existence of such a function is obvious. Let also $F$ be any compactly supported function equal to $|x|$ for $|x|\leq 1$. We have now the following chain of inequalities of random variables:

\begin{align*}
\bigg|\int_\R H d\tau_n\bigg| &\leq F(X_n(\psi_\eps))+\bigg|\int_\R H-\psi_\eps d\tau_n\bigg|
\\  &\leq F(X_n(\psi_\eps))+\int_{\R\backslash A(\eps)}d\tau_n.
\end{align*}
The first inequality comes from the fact that $|X_n(\psi_\eps)|\leq 1$ and the definition of $F$; the second inequality is because $H-\psi_\eps$ is zero on $A(\eps)$ and $|H-\psi_\eps|\leq 1.$ Thus by the previous theorem \ref{sc} we have: 
$$\lim_{n\to\infty}\mathbb{E}F(X_n(\psi_\eps))=F(X(\psi_\eps))=0,$$ since $\psi_\eps$ is odd, and $$\lim_{n\to \infty}\mathbb{E}\int_{\R\backslash A(\eps)} d\tau_n=\int_{\R\backslash A(\eps)} d\tau_{sc}\leq 2\eps.$$ Hence for every $\eps>0$
$$\lim_{n\to\infty}\frac{\mathbb{E}[\mu_n-\nu_n]}{n}\leq 2\eps,$$
which together with $\mu_n-\nu_n\geq 0$ proves the proposition.
\end{proof}
\end{propo}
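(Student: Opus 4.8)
The plan is to deduce the statement from Wigner's semicircular law (Theorem~\ref{sc}) applied to the empirical spectral distribution $\tau_n$ of the rescaled matrix $Q/\sqrt n$. First I would use the scale invariance of the inertia indices: since $\ii^+$ and $\ii^-$ are unchanged under multiplication by a positive scalar, $\mu_n-\nu_n=|\ii^+-\ii^-|$ equals $n$ times
$$\bigg|\int_{(0,\infty)}d\tau_n-\int_{(-\infty,0)}d\tau_n\bigg|=\bigg|\int_\R H\,d\tau_n\bigg|,$$
where $H=\mathrm{sign}$; here I am using that $\tau_n(\{0\})=0$ almost surely, because a GOE matrix is nonsingular with probability one. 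So the problem reduces to controlling $\mathbb{E}\,\big|\int_\R H\,d\tau_n\big|$.

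The difficulty is that $H$ is discontinuous at the origin, exactly where the semicircle law gives no pointwise information, so Theorem~\ref{sc} does not apply to $H$ directly. I would get around this by a regularization: for each $\eps>0$ choose an odd, continuous, compactly supported $\psi_\eps$ with $|\psi_\eps|\le 1$ coinciding with $H$ on $A(\eps)=(-3,-\eps)\cup(\eps,3)$, together with a bounded continuous $F$ with $F(x)=|x|$ for $|x|\le 1$. Since $|X_n(\psi_\eps)|\le 1$, and $H-\psi_\eps$ is supported on $\R\setminus A(\eps)$ with $|H-\psi_\eps|\le 1$, this yields the deterministic bound
$$\bigg|\int_\R H\,d\tau_n\bigg|\le F(X_n(\psi_\eps))+\int_{\R\setminus A(\eps)}d\tau_n.$$

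Finally I would take expectations and let $n\to\infty$ in this inequality. By the second part of Theorem~\ref{sc}, $\mathbb{E}\,F(X_n(\psi_\eps))\to F(X_{sc}(\psi_\eps))$, and this limit is $0$ because $\psi_\eps$ is odd and $\tau_{sc}$ is symmetric; by the first part, $\mathbb{E}\int_{\R\setminus A(\eps)}d\tau_n\to\int_{\R\setminus A(\eps)}d\tau_{sc}$, which is at most $2\eps$ since $\tau_{sc}$ is supported in $[-2,2]$ with density bounded by $\tfrac1\pi$, so the only contribution comes from $[-\eps,\eps]$. Thus $\limsup_n\mathbb{E}[(\mu_n-\nu_n)/n]\le 2\eps$ for every $\eps>0$, and combined with $\mu_n-\nu_n\ge 0$ this gives the claimed limit. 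The only genuine obstacle is the one already pointed out — the jump of $H$ at $0$ — and it is handled entirely by the \emph{interval} statement in Wigner's theorem, which supplies the needed uniform-in-$n$ control of the spectral mass near the origin.
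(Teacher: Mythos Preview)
Your argument is essentially identical to the paper's: the same rewriting of $(\mu_n-\nu_n)/n$ as $|\int H\,d\tau_n|$, the same odd compactly supported regularization $\psi_\eps$ agreeing with $H$ on $A(\eps)$, the same auxiliary $F$ to invoke the second part of Theorem~\ref{sc}, and the same interval statement to bound the mass on $\R\setminus A(\eps)$ by $2\eps$. You even silently fix the paper's typo in the definition of $A(\eps)$ (writing $(-3,-\eps)\cup(\eps,3)$ rather than $(-3,\eps)\cup(\eps,3)$), and your choice of $F$ bounded continuous matches exactly the hypothesis of Theorem~\ref{sc}.
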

We derive the following theorem for the expected value of the total Betti number of a random quadratic hypersurface in $\RP^{n-1}.$

\begin{teo}\label{one}
Let $q$ be a random, Weyl distributed, quadratic form on $\R^{n}$ and $X_\R$ be its zero locus in $\RP^{n-1}.$ Then
$$\lim_{n\to \infty}\frac{\mathbb{E}[b(X_\R)]}{n}=1.$$
\begin{proof}
Since generically $\mu_{n}(q)=n-\nu_{n}(q)$, by theorem \ref{AgLe} we have $b(X_\R)=n+\nu_{n}(q)-\mu_{n}(q)$ (this is a restatement of formula \ref{one}). Since if $q$ is Weyl distributed then the corresponding symmetric matrix is in the Gaussian Orthogonal Ensemble, the conclusion follows from the limit of the previous proposition.
\end{proof}
\end{teo}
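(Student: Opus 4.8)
The plan is to translate $b(X_\R)$ into the signature of the symmetric matrix attached to $q$ and then invoke the previous Proposition, which is the only nontrivial ingredient.

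First I would appeal to the generic formula $b(X_\R)=2(n-\mu_W)$ valid for a single quadric, where $W=\mathrm{span}\{q\}$ and $\mu_W=\max\ii^+|_W$. Since the Weyl distribution on $\Q(n)$ is absolutely continuous with respect to Lebesgue measure, the degenerate quadratic forms form a null set, so with probability one $q$ is nondegenerate and that formula holds. Next I would observe that $\mu_W=\mu_n(q):=\max\{\ii^+(q),\ii^-(q)\}$: on the punctured line $W\setminus\{0\}$ the positive inertia index equals $\ii^+(q)$ on the ray $t>0$ and $\ii^-(q)$ on the ray $t<0$, and nothing else, so its maximum over $W\setminus\{0\}$ is $\mu_n(q)$. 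Since $\ii^+(q)+\ii^-(q)=n$ almost surely, this gives $n-\mu_W=\nu_n(q)$, hence almost surely
$$b(X_\R)=2\nu_n(q)=n+\nu_n(q)-\mu_n(q)=n-\bigl(\mu_n(q)-\nu_n(q)\bigr);$$
note this also correctly returns $b(X_\R)=0$ in the degenerate-of-measure-zero phenomenon's complement where $q$ happens to be definite.

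Then I would take expectations. There is no integrability subtlety because $0\le \mu_n-\nu_n\le n$, so dividing the identity above by $n$ yields
$$\frac{\E[b(X_\R)]}{n}=1-\frac{\E[\mu_n(q)-\nu_n(q)]}{n}.$$
Finally, the dictionary of Section~2 says the symmetric matrix $Q$ associated with a Weyl-distributed $q$ lies in the Gaussian Orthogonal Ensemble, so the previous Proposition applies and the right-hand term tends to $0$ as $n\to\infty$, giving $\lim_{n\to\infty}\E[b(X_\R)]/n=1$.

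The only place where real analysis enters is the Proposition, and I expect its core — showing $\E\,|\ii^+-\ii^-|/n\to 0$ for a GOE matrix — to be the main obstacle. Morally this is true because the semicircular density $\tau_{sc}$ is symmetric about the origin, so positive and negative eigenvalues asymptotically balance; the genuine difficulty is that $\mathrm{sign}(x)$ is discontinuous at $x=0$ while eigenvalues accumulate there, which is dealt with by approximating $\mathrm{sign}$ from inside by odd, continuous, compactly supported cutoffs $\psi_\eps$ and letting $\eps\to 0$, as in the proof of the Proposition. Everything else — the passage from the topology of $X_\R$ to the inertia data of $q$ — is furnished by Theorem~\ref{AgLe}.
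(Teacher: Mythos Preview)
Your proof is correct and follows essentially the same route as the paper: reduce $b(X_\R)$ to $n-(\mu_n-\nu_n)$ via the single-quadric formula $b(X_\R)=2(n-\mu_W)$ together with $\mu_W=\max\{\ii^+(q),\ii^-(q)\}$ and $\mu_n+\nu_n=n$ almost surely, then invoke the preceding Proposition on $\E[\mu_n-\nu_n]/n\to 0$ for the GOE. Your version is in fact more explicit than the paper's, spelling out why the generic formula applies with probability one and why $\mu_W=\mu_n(q)$; the concluding remarks on the Proposition's approximation argument are accurate but not strictly needed here.
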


\begin{remark}
If we notice that for a nonsingular real quadric in $\CP^n$ we have $b(X_\C)=n+\frac{1}{2}(1+(-1)^{n+1})$, then the previous limit can be rewritten in a more fashionable way as:
$$\lim_{n\to\infty}\mathbb{E}[b(X_\R)/b(X_\C)]=1$$
The fact that this limit had to be less or equal then one is the content of Smith's inequalities (see the Appendix of \cite{Wilson}).
\end{remark}

\section{The case of the intersection of two quadrics}

In the case $X_\R$ is the intersection of two quadrics $q_1, q_2$ in $\RP^{n-1}$, we can derive directly from Theorem \ref{AgLe} the following. Recall the definition of $W$ as $\textrm{span}\{q_1, q_2\}\subset \Q(n)$, the number $\mu_{W}=\max \ii^+|_{W}$ and the spectral variety $\Sigma_W=\Sigma\cap W$ (in this case it is a subvariety of $S^1$, i,e consists either of a finite number of points or is the whole $S^1$). The topology of the intersection of two quadrics was studied by the author in \cite{Le2}; in fact the following proposition follows directly from Theorem 8 of \cite{Le2}, though we give a short proof here using Theorem 1.

\begin{propo} For a generic pair $(q_1, q_2)$ we have 
$$b(X_\R)= 3n-1-4\mu_W +(c_W+d_W) +\frac{1}{2}b(\Sigma_W).$$
where $c_W$ and $d_W$ belong to $\{0,1\}$.
\begin{proof}
In this case, summing  the elements for the third (the last) page of the spectral sequence of theorem \ref{AgLe} gives:
$$b(X_\R)=\textrm{rk}(E_3)=n-1-2(\mu_W -\min \ii^+|_W) +c_W+d_W+\sum_{j=\min \ii^+|_W}^{\mu_W-1}b_0(\Omega^{j+1}).$$
where we have called $c_W$ and $d_W$ respectively $\textrm{rk}(E_3^{0, \mu})$ and $\textrm{rk}(E_3^{2, \mu-1})$; a direct look at the second table of the spectral sequence gives $c_W,d_W\in \{0,1\}$.
Now for a generic choice of $q_1, q_2$ we have $\min \ii^+|_W=n+1-\mu_W$, $\Sigma_W$ consists of a finite number of points and the function $\ii^+$ jumps exactly by $\pm 1$ when crosses $\Sigma_W.$ In particular each point of $\Sigma_W$ belongs exactly to one of the $\partial \Omega^{j+1}$, $\min \ii^+|_W\leq j\leq \mu_W-1$. Thus Alexander-Pontryiagin duality gives:
$$\sum_{j=\min \ii^+|_W}^{\mu_{W}-1}b_0(\Omega^{j+1})=\frac{1}{2}b(\Sigma_W).$$
which yelds the desired formula.

\end{proof}
\end{propo}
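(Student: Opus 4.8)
The plan is to run the cohomology spectral sequence of Theorem~\ref{AgLe} for the plane $W=\textrm{span}\{q_1,q_2\}$ — which for a generic pair has dimension two — and to read off $b(X_\R)$ from its last page. Since $W$ is contractible and, as remarked after Theorem~\ref{AgLe}, each $\Omega^{j+1}$ deformation retracts onto an open subset of the circle $S^1\subset W$, the long exact sequence of the pair $(W,\Omega^{j+1})$ immediately shows that $E_2^{i,j}=H^i(W,\Omega^{j+1})$ can be nonzero only for $i\in\{0,1,2\}$: more precisely $E_2^{0,j}\cong\Z_2$ exactly when $\Omega^{j+1}=\emptyset$, $E_2^{2,j}\cong\Z_2$ exactly when $\Omega^{j+1}=S^1$, and $E_2^{1,j}$ has rank $b_0(\Omega^{j+1})-1$ when $\Omega^{j+1}$ is a proper nonempty subset of $S^1$. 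With only three rows the single possibly nonzero differential is $d_2\colon E_2^{0,j}\to E_2^{2,j-1}$, so the sequence degenerates at $E_3=E_\infty$ and $b(X_\R)=\textrm{rk}(E_3)=\textrm{rk}(E_2)-2\,\textrm{rk}(d_2)$.

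Next I would pin down the combinatorics of the $E_2$-table from genericity. For a generic pair one has $\dim W=2$, and $W$ meets $\Sigma$ transversally while avoiding its lower strata (which have codimension $\geq 3$), so $\Sigma_W$ is a finite subset of the rank-$(n-1)$ locus and crossing one of its points changes $\ii^+$ by exactly $\pm 1$. Combined with the antipodal identity $\ii^+(-q)=n-\ii^+(q)$ on the nondegenerate part and the intermediate value property along the connected circle, this forces $\ii^+|_W$ to assume every value in $[\,n-\mu_W,\ \mu_W\,]$; in particular $\min\ii^+|_W=n-\mu_W$. Hence $\Omega^{j+1}=S^1$ for $j+1\leq n-\mu_W$, $\Omega^{j+1}$ is a nonempty disjoint union of open arcs for $n-\mu_W<j+1\leq\mu_W$, and $\Omega^{j+1}=\emptyset$ for $j+1>\mu_W$; moreover each point of $\Sigma_W$ is an endpoint of an arc of $\Omega^{j+1}$ for exactly one value of $j$. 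Since an arc of a subset of $S^1$ has two endpoints, this yields the ``Alexander--Pontryagin'' identity $\sum_j b_0(\Omega^{j+1})=\tfrac12 b(\Sigma_W)$, which governs the contribution of the row $i=1$. Adding the three rows of $E_2$ over these ranges and subtracting $2\,\textrm{rk}(d_2)$ then produces the asserted closed form, with $c_W:=\textrm{rk}(E_3^{0,\mu_W})$ and $d_W:=\textrm{rk}(E_3^{2,\mu_W-1})$ collecting what survives at the two extreme filtration levels; since each of these is, respectively, the kernel and the cokernel of a homomorphism between (at most) one-dimensional $\Z_2$-spaces, each lies in $\{0,1\}$.

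The step I expect to be the real obstacle is the bookkeeping at the two ends of the filtration: one must determine precisely which of the maps $d_2\colon E_2^{0,j}\to E_2^{2,j-1}$ are isomorphisms — so that the ``interior'' of rows $0$ and $2$ cancels in pairs and only the two boundary classes $c_W,d_W$ remain — and one must be careful about the truncation of the spectral sequence, since this is exactly what fixes the additive constant $-1$. This is the content of the phrase ``a direct look at the second table'' but it is genuinely the delicate point; an alternative is to bypass the spectral-sequence accounting and quote the direct topological description of the intersection of two quadrics from Theorem~8 of~\cite{Le2}. Everything else — the collapse of the spectral sequence, the transversality statements, and the arc-counting giving $\tfrac12 b(\Sigma_W)$ — is then routine.
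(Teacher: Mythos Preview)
Your plan is essentially the paper's own argument, only spelled out in more detail: you run the spectral sequence of Theorem~\ref{AgLe}, compute the $E_2$ page via the long exact sequence of the pair $(W,\Omega^{j+1})$ with $\Omega^{j+1}\subset S^1$, note that only $d_2$ can be nonzero so $E_3=E_\infty$, and then use the generic structure of $\Sigma_W$ to turn $\sum_j b_0(\Omega^{j+1})$ into $\tfrac12 b(\Sigma_W)$. The arc-counting you give is exactly what the paper calls ``Alexander--Pontryagin duality,'' and your value $\min\ii^+|_W=n-\mu_W$ is the one that makes the final formula come out as stated.

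One clarification worth making: your worry about ``interior'' differentials $d_2\colon E_2^{0,j}\to E_2^{2,j-1}$ is misplaced. For a generic pair with $\mu_W>n-\mu_W$ the columns $i=0$ and $i=2$ are supported on \emph{disjoint} ranges of $j$ (namely $j\ge\mu_W$ and $j\le n-\mu_W-1$ respectively), so every $d_2$ is automatically zero --- there is no cancellation in pairs, and $c_W=1$, $d_W=0$ directly. The only situation in which $d_2^{0,\mu_W}$ can be nonzero is the degenerate case $\mu_W=n/2$ with $\ii^+$ constant on $S^1$; there $c_W$ and $d_W$ are literally the kernel and cokernel of that single map. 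So the ``bookkeeping at the two ends'' is lighter than you anticipate; the genuine care is, as you say, in the truncation of the $j$-range, which is what produces the constant $n-1$.
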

In particular we see that
\begin{equation}\label{extwo}\mathbb{E}[b(X_\R)]=3n-1-4\mathbb{E}[\mu_W]+\mathbb{E}[c_W+d_W]+\frac{1}{2}\mathbb{E}b(\Sigma_W).\end{equation}
We will compute each term of the previous sum; we start by introducing some auxiliary material. We recall now that the probability that the interval $(-\eps, \eps)$ does not contain any of the eigenvalues of a $Q\in \textrm{GOE}(n)$ is called \emph{gap probability}; we consider this probability as a function of $\epsilon$ and denote it by $f_n(\eps).$
In the case $n$ is even, $f_n(\eps)$ can be evaluated using methods from integrable systems. Following \cite{FoWhi} we have:\footnote{Here we use the same notation as in \cite{FoWhi} to help the reader comparing with this reference. The subscript of $\sigma_V$ is due to the connection with the Painlevé fifth equation.}
\begin{equation}\label{f}f_n(\eps)=\tau_{\sigma_V}(\eps^2),\end{equation}
where $\tau_{\sigma_V}$ is a function satisfying:
\begin{equation}\label{tau}\sigma_V(t)=t\frac{d}{dt}\log \tau_{\sigma_V}(t)\quad \textrm{and}\quad \lim_{t\to 0^+}\sigma_{V}(t)t^{-1/2}=-\frac{\Gamma(\frac{n+1}{2})}{\Gamma(\frac{n}{2})\Gamma(\frac{1}{2})\Gamma(\frac{3}{2})}=-c_n. \end{equation}
We denote by $\Gamma$ the Euler Gamma Function; $\sigma_{V}$ itself satisfies a second order differential equation (equation (2.9) of \cite{FoWhi}) but for our purposes is not necessary to write it down explicitly.
\begin{lemma}\label{bound2}For every $n\in 2\mathbb{N}$:
$$\lim_{\eps\to 0^+}f_n'(\eps)=-2c_n.$$
Moreover as $n$ goes to infinity $c_n\sim \frac{\sqrt{2n}}{\pi}.$
\end{lemma}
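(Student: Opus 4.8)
The plan is to extract the two claims of Lemma~\ref{bound2} directly from the Forrester--Witte representation \eqref{f}--\eqref{tau}. For the first claim, I would differentiate $f_n(\eps)=\tau_{\sigma_V}(\eps^2)$ with respect to $\eps$ using the chain rule: $f_n'(\eps)=2\eps\,\tau_{\sigma_V}'(\eps^2)$. The relation $\sigma_V(t)=t\frac{d}{dt}\log\tau_{\sigma_V}(t)$ rewrites this as $\tau_{\sigma_V}'(t)=\tau_{\sigma_V}(t)\sigma_V(t)/t$, so $f_n'(\eps)=2\eps\cdot\tau_{\sigma_V}(\eps^2)\sigma_V(\eps^2)/\eps^2 = (2/\eps)\,\tau_{\sigma_V}(\eps^2)\sigma_V(\eps^2)$. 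Now I take the limit $\eps\to 0^+$: one has $\tau_{\sigma_V}(0)=f_n(0)=1$ (the gap probability for a degenerate interval is $1$), and the boundary condition $\sigma_V(t)t^{-1/2}\to -c_n$ gives $\sigma_V(\eps^2)\sim -c_n\eps$. Hence $f_n'(\eps)\sim (2/\eps)\cdot 1\cdot(-c_n\eps) = -2c_n$, which is the first assertion. I should be slightly careful to justify interchanging the limit with the product; since $\tau_{\sigma_V}$ is continuous at $0$ with value $1$ and the asymptotic for $\sigma_V$ is genuine (not just big-$O$), this is routine.

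For the asymptotics of $c_n$, the definition $c_n=\frac{\Gamma(\frac{n+1}{2})}{\Gamma(\frac{n}{2})\Gamma(\frac{1}{2})\Gamma(\frac{3}{2})}$ has $\Gamma(\frac{1}{2})=\sqrt{\pi}$ and $\Gamma(\frac{3}{2})=\frac{\sqrt{\pi}}{2}$ in the denominator, so $c_n=\frac{2}{\pi}\cdot\frac{\Gamma(\frac{n+1}{2})}{\Gamma(\frac{n}{2})}$. The ratio $\Gamma(\frac{n+1}{2})/\Gamma(\frac{n}{2})$ is $\sim\sqrt{n/2}$ as $n\to\infty$; this is the standard estimate $\Gamma(x+\tfrac12)/\Gamma(x)\sim\sqrt{x}$ applied with $x=n/2$, which follows from Stirling's formula. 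Therefore $c_n\sim\frac{2}{\pi}\sqrt{n/2}=\frac{\sqrt{2n}}{\pi}$, as claimed.

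I do not expect a serious obstacle here: both parts are essentially unwinding definitions plus a Stirling estimate. The one point deserving a line of care is the justification that $\tau_{\sigma_V}$ is continuous at $t=0$ with $\tau_{\sigma_V}(0)=1$ — this is exactly the statement $f_n(0)=1$, which holds because the gap probability that an empty-width interval contains no eigenvalue is trivially $1$, and \eqref{f} identifies $f_n(\eps)$ with $\tau_{\sigma_V}(\eps^2)$ for all $\eps\geq 0$. Given that, the chain-rule computation above is valid on $(0,\delta)$ for small $\delta$ and the limit passes through cleanly.
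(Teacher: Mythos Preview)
Your proposal is correct and follows essentially the same route as the paper: differentiate $f_n(\eps)=\tau_{\sigma_V}(\eps^2)$ via the chain rule, rewrite $\tau_{\sigma_V}'$ through $\sigma_V(t)=t\,(\log\tau_{\sigma_V})'$, and pass to the limit using $\tau_{\sigma_V}(\eps^2)\to 1$ together with $\sigma_V(t)t^{-1/2}\to -c_n$; then obtain the asymptotic for $c_n$ from $\Gamma(1/2)=\sqrt{\pi}$, $\Gamma(3/2)=\sqrt{\pi}/2$ and the Stirling-type ratio $\Gamma(x+\tfrac12)/\Gamma(x)\sim\sqrt{x}$. The only addition over the paper is your explicit remark that $f_n(0)=1$ justifies $\tau_{\sigma_V}(0)=1$, which is a welcome clarification.
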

\begin{proof}
For the first limit we argue as follows. Since $\sigma_V(t)=t\tau_{\sigma_V}'(t)/\tau_{\sigma_V}(t)$ we have $\tau_{\sigma_V}'(t)=\sigma_V(t)\tau_{\sigma_V}(t)/t$ and thus:
$$\lim_{\eps\to 0^+}f_n'(\eps)=\lim_{\eps\to 0^+}2\eps \tau_{\sigma_V}'(\eps^2)=\lim_{\eps\to 0^+}\frac{2\sigma_V(\eps^2)\tau(\eps^2)}{\eps}=-2c_n,$$
where for the last limit we have used the limit in equation (\ref{tau}) and the fact that $\lim_\eps f_n(\eps)=\lim_{\eps}\tau_{\sigma_V}(\eps^2)=1$ by definition.\\
The second limit follows immediately using the known values of the Gamma function $\Gamma(1/2)=\sqrt{\pi}$, $\Gamma(3/2)=\sqrt{\pi}/2$ and the Stirling's asymptotic.

\end{proof}
We consider now the unit sphere $S^{N-1}$ in $\textrm{Sym}(n, \R)$ (with respect to the Frobenius norm) and the set $\Sigma$ of singular matrices of norm one. We are interested in bounding the intrinsic volume of $\Sigma$. In order to do that we start by bounding the volume of an $\eps$-tube $\Sigma_\eps$ of $\Sigma$ in $S^{N-1}$ (the volume is computed with respect to the Riemannian structure induced on $S^{N-1}$ by the norm). Let:
$$\sigma(Q)=\min_{\lambda \in s(Q)}|\lambda|;$$
notice that using this notation the gap probability $f_n(\eps)$ equals $\PP\{\sigma(Q)\geq \epsilon\}.$
\begin{lemma}\label{bound3}
$$\emph{\textrm{Vol}}(\Sigma_\eps)\leq\emph{\textrm{Vol}}(S^{N-1})\cdot (1-\mathbb{P}\{\sigma(Q)\geq \eps \|Q\|\}).$$
\end{lemma}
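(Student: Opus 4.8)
The plan is to reduce the estimate to a set-theoretic inclusion on the sphere. Concretely, I would first prove that
$$\Sigma_\eps\ \subseteq\ \{P\in S^{N-1}\,:\,\sigma(P)<\eps\},$$
and then identify the normalized volume of the right-hand side with a GOE probability. For the latter, recall from Section 2 that, in the Frobenius norm, the GOE density on $\textrm{Sym}(n,\R)$ is proportional to $e^{-\|Q\|^2/2}$, so that $Q/\|Q\|$ is uniformly distributed on $S^{N-1}$; since $\sigma$ is homogeneous of degree one in $Q$, for every $\eps>0$ the event $\{\sigma(Q)\geq\eps\|Q\|\}$ coincides with $\{\sigma(Q/\|Q\|)\geq\eps\}$, and hence
$$\textrm{Vol}\bigl(\{P\in S^{N-1}:\sigma(P)<\eps\}\bigr)=\textrm{Vol}(S^{N-1})\cdot\bigl(1-\mathbb{P}\{\sigma(Q)\geq\eps\|Q\|\}\bigr).$$
Granting the inclusion, the lemma then follows at once by monotonicity of the volume.

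To establish the inclusion I would argue by eigenvalue perturbation. Let $P\in\Sigma_\eps$. Since $\Sigma$ is a compact subset of $S^{N-1}$, there is $P''\in\Sigma$ with $d_{S^{N-1}}(P,P'')=d_{S^{N-1}}(P,\Sigma)=:\theta\leq\eps$, and the corresponding chord length in $\textrm{Sym}(n,\R)$ is $\|P-P''\|=2\sin(\theta/2)$. Because the operator norm is dominated by the Frobenius norm, Weyl's perturbation inequality for symmetric matrices implies that every eigenvalue of $P$ lies within $\|P-P''\|$ of an eigenvalue of $P''$; as $P''$ is singular, $0$ is an eigenvalue of $P''$, so $P$ has an eigenvalue of modulus at most $\|P-P''\|$. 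Therefore
$$\sigma(P)\ \leq\ \|P-P''\|\ =\ 2\sin(\theta/2)\ \leq\ \theta\ \leq\ \eps,$$
and this is in fact strict: if $P\in\Sigma$ then $\sigma(P)=0$, while if $P\notin\Sigma$ then $\theta>0$ and $2\sin(\theta/2)<\theta$. This is precisely the claimed inclusion, and combining the two parts gives
$$\textrm{Vol}(\Sigma_\eps)\ \leq\ \textrm{Vol}\bigl(\{P\in S^{N-1}:\sigma(P)<\eps\}\bigr)\ =\ \textrm{Vol}(S^{N-1})\cdot\bigl(1-\mathbb{P}\{\sigma(Q)\geq\eps\|Q\|\}\bigr).$$

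The argument is short, and the one point that must be handled with care is the direction of the comparison between $\sigma(P)$ and the distance of $P$ to $\Sigma$. What is needed is the bound $\sigma(P)\leq d_{S^{N-1}}(P,\Sigma)$, which is exactly what Weyl's inequality yields; the opposite bound, obtained by explicitly deforming $P$ along an eigendirection of minimal modulus to the singular matrix $P-\lambda vv^{T}$, only gives $d_{S^{N-1}}(P,\Sigma)\leq\arcsin\sigma(P)$ and hence a lower bound on the volume of the tube, which is useless here. The elementary fact that a chord is no longer than the arc it subtends both supplies the estimate $2\sin(\theta/2)\leq\theta$ and makes the final inequality strict, so that we may disregard the zero-measure locus $\{\sigma(P)=\eps\}$ entirely.
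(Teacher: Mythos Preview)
Your proof is correct and follows essentially the same two-step strategy as the paper: establish the inclusion $\Sigma_\eps\subseteq\{P\in S^{N-1}:\sigma(P)\le\eps\}$ via the bound $\sigma(P)\le d_{S^{N-1}}(P,\Sigma)$, and then use that the GOE direction $Q/\|Q\|$ is uniform on $S^{N-1}$ together with the homogeneity of $\sigma$. The only cosmetic difference is that you obtain the distance-eigenvalue bound from Weyl's perturbation inequality and the chord--arc comparison, whereas the paper routes it through the Eckart--Young theorem in $\mathrm{Mat}(n,\R)$ and the chain $d_{\mathrm{Mat}}\le d_{\mathrm{Sym}}\le d_{S^{N-1}}$; these are equivalent classical facts here.
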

\begin{proof}

We let $Z\subset \textrm{Mat}(n, \R)$ be the set of degenerate matrices and consider the following chain of inequalities for a matrix $Q\in \textrm{Sym}(n, \R)$:
$$d_{\textrm{Mat}(n,\R)}(Q, Z)\leq d_{\textrm{Sym}(n,\R)}(Q, Z\cap \textrm{Sym}(n,\R))\leq d_{S^{N-1}}(Q, \Sigma).$$
In particular the set $\Sigma_\epsilon$ is contained in the set $\{d_{\textrm{Mat}(n,\R)}(Q,Z)\leq \epsilon\}\cap S^{N-1},$ and by Eckart-Young theorem the last one equals $\{\sigma(Q)\leq \eps \|Q\|\}\cap S^{N-1};$ thus we get:
\begin{equation}\label{eq:bound1}
\textrm{Vol}(\Sigma_\epsilon)\leq\textrm{Vol}\{\sigma(Q)\leq \eps \|Q\|\}\cap S^{N-1}.
\end{equation}
Since the probability distribution of the Gaussian Orthogonal Ensemble is uniform on the sphere $S^{N-1},$ then the conclusion follows.
\end{proof}

\begin{remark}The probability $\mathbb{P}\{\sigma(Q)\geq \eps \|Q\|\}$ \emph{is not} the gap probability, because of the rescaling factor $\|Q\|$.
\end{remark}
The following lemma gives an upper bound for the volume of $\Sigma$.
\begin{lemma}\label{bound4}If $n\in 2\mathbb{N}$:
$$\emph{\textrm{Vol}}(\Sigma)\leq \emph{\textrm{Vol}}(S^{N-1})\cdot O(n c_n)$$
\end{lemma}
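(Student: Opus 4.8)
The plan is to combine the $\eps$-tube estimate of Lemma~\ref{bound3} with the classical identification of the Riemannian volume of a hypersurface with its Minkowski content, and then to control the resulting one-dimensional quantity via the derivative of the gap probability at the origin supplied by Lemma~\ref{bound2}.

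First I would recall that $\Sigma=\{\det=0\}\cap S^{N-1}$ is a compact real algebraic hypersurface of $S^{N-1}$, smooth away from the matrices whose kernel has dimension at least two; this singular locus has codimension at least three in $\textrm{Sym}(n,\R)$, hence at least two in $\Sigma$. Consequently the $\eps$-neighborhood of the singular part contributes only $O(\eps^{3})$ to $\textrm{Vol}(\Sigma_\eps)$, while on the smooth part the normal exponential map gives $2\eps\,\textrm{Vol}(\Sigma)(1+o(1))$; in other words
\[
\textrm{Vol}(\Sigma)=\lim_{\eps\to 0^+}\frac{\textrm{Vol}(\Sigma_\eps)}{2\eps}.
\]
Inserting Lemma~\ref{bound3} and abbreviating $h(\eps)=\PP\{\sigma(Q)<\eps\|Q\|\}$, this already gives
\[
\textrm{Vol}(\Sigma)\le \textrm{Vol}(S^{N-1})\cdot\limsup_{\eps\to 0^+}\frac{h(\eps)}{2\eps},
\]
so the whole problem is reduced to showing $\limsup_{\eps\to0^+}\eps^{-1}h(\eps)=O(nc_n)$.

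Next I would use the rotational invariance of the GOE for the Frobenius norm (its density is proportional to $e^{-\|Q\|^2/2}$): writing $Q=RU$ with $R=\|Q\|$ and $U=Q/\|Q\|$, the two factors are independent, $U$ is uniform on $S^{N-1}$, and $\mathbb{E}R^{2}=\mathbb{E}\,\textrm{tr}(Q^{2})=N$. Since $\sigma$ is positively $1$-homogeneous, $\sigma(Q)=R\,\sigma(U)$; hence on the one hand the scale invariance of $\{\sigma(Q)<\eps\|Q\|\}$ gives $h(\eps)=\PP\{\sigma(U)<\eps\}$, and on the other hand $1-f_n(\eps)=\PP\{\sigma(Q)<\eps\}=\mathbb{E}_R\,h(\eps/R)$. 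The function $h$ is non-decreasing, so restricting the last expectation to the event $\{R\le r_0\}$ with $r_0=2\sqrt N$, and using Markov's inequality $\PP\{R\le r_0\}=\PP\{R^2\le 4N\}\ge 3/4$, I get $1-f_n(\eps)\ge \tfrac34\,h(\eps/r_0)$, i.e.
\[
h(\eps)\le \tfrac43\bigl(1-f_n(2\sqrt N\,\eps)\bigr)\qquad\text{for all }\eps>0.
\]
By Lemma~\ref{bound2} together with $f_n(0)=1$ one has $\eps^{-1}(1-f_n(\eps))\to -f_n'(0^+)=2c_n$, so $\limsup_{\eps\to 0^+}\eps^{-1}h(\eps)\le \tfrac{16}{3}\sqrt N\,c_n$; combining with the first step and $\sqrt N=\sqrt{n(n+1)/2}\le n$ yields $\textrm{Vol}(\Sigma)\le \tfrac83\sqrt N\,c_n\,\textrm{Vol}(S^{N-1})=\textrm{Vol}(S^{N-1})\cdot O(nc_n)$.

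The hard part is the first step: one must make sure that neither the singularities of $\Sigma$ nor the possible non-injectivity of the normal exponential map affect the leading order of $\textrm{Vol}(\Sigma_\eps)$. I would handle this by invoking Federer's theorem identifying the Minkowski content of a compact rectifiable set with its Hausdorff measure, of which only the inequality $\textrm{Vol}(\Sigma)\le\liminf_{\eps\to0^+}\textrm{Vol}(\Sigma_\eps)/(2\eps)$ is actually needed; note that this step, and the rest of the argument, use the parity of $n$ only through Lemma~\ref{bound2}. Everything else — rotational invariance of the GOE, monotonicity of $h$, and the value $f_n'(0^+)=-2c_n$ — is routine.
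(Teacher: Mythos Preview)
Your argument is correct and follows the same overall skeleton as the paper---Minkowski content of $\Sigma$ via the tube estimate of Lemma~\ref{bound3}, the polar decomposition $Q=RU$ with $R=\|Q\|$ independent of the uniform direction $U$, and the derivative of the gap probability at the origin from Lemma~\ref{bound2}---but the crucial step, passing from the ``spherical'' probability $h(\eps)=\PP\{\sigma(U)<\eps\}$ back to the gap probability $1-f_n$, is handled differently. The paper writes the identity $f_n(\eps)=\frac{\textrm{Vol}(S^{N-1})}{(2\pi)^{N/2}}\int_0^\infty g_n(\eps/r)\,r^{N-1}e^{-r^2/2}\,dr$ (with $g_n=1-h$), differentiates under the integral, and then applies Fatou's lemma to the non-negative integrand $-g_n'(\eps/r)r^{N-2}e^{-r^2/2}$ to obtain $\lim_{\eps\to0}(-g_n'(\eps))\le 2c_n\cdot\sqrt{\pi}\,\Gamma(N/2)/\Gamma((N-1)/2)$; the ratio of Gamma functions is $\sim\sqrt{N/2}$, which gives exactly the same $O(\sqrt{N}\,c_n)=O(nc_n)$ you reach. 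Your route---truncating the radial variable at $r_0=2\sqrt{N}$ via Markov and using only the monotonicity of $h$---is more elementary: it avoids differentiation under the integral sign and the Fatou step (both of which in the paper tacitly assume existence of the relevant limits), at the price of a slightly worse explicit constant. Either way, the parity hypothesis $n\in2\mathbb{N}$ enters only through Lemma~\ref{bound2}, as you note.
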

\begin{proof}

Let us start by defining the functions $g_n(\eps)=\PP\{\sigma(Q)\geq \eps\|Q\|\}$ and $\hat{\sigma}=\sigma|_{S^{N-1}}.$ Notice that in polar coordinates $(\theta, r)\in S^{N-1}\times (0, \infty)$ we have:
$$\{\sigma(Q)\geq \eps \|Q\|\}=\{\hat{\sigma}(\theta)\geq \eps\}\quad \textrm{and}\quad \{\sigma(Q)\geq \eps\}=\{\hat{\sigma}(\theta)\geq \eps/r\}.$$
Both identities follow by: $\sigma(Q)=\|Q\|\sigma(Q/\|Q\|)=r\hat{\sigma}(\theta).$
In particular we can write:
\begin{align}\nonumber
f_n(\eps)&=\frac{1}{(2 \pi)^{N/2}}\int_{S^{N-1}}\int_{0}^{\infty}\chi_{\{\hat{\sigma}(\theta)\geq \eps/r\}}(\theta, r)r^{N-1}e^{-\frac{r^2}{2}}dr dS^{N-1}=\\
&\nonumber=\frac{1}{(2 \pi)^{N/2}}\int_{0}^{\infty}\underbrace{\left(\int_{S^{N-1}}\chi_{\{\hat{\sigma}(\theta)\geq \eps/r\}}(\theta, r)dS^{N-1}\right)}_{\textrm{Vol}(S^{N-1})g_n(\eps/r)}r^{N-1}e^{-\frac{r^2}{2}}dr=\\
&=\frac{\textrm{Vol}(S^{N-1})}{(2 \pi)^{N/2}}\int_{0}^{\infty}g_n(\eps/r)r^{N-1}e^{-\frac{r^2}{2}}dr\nonumber
\end{align}
In particular from the last equation we get:
\begin{equation}\label{derivative}
f'_n(\eps)=\frac{\textrm{Vol}(S^{N-1})}{(2 \pi)^{N/2}}\int_{0}^{\infty}g'_n(\eps/r)r^{N-2}e^{-\frac{r^2}{2}}dr.
\end{equation}
The function $g_n$ is monotone decreasing (as we let $\eps$ increase the probability of $\{\sigma(Q)\geq \eps\|Q\|\}$ decreases); thus $g'_n\leq 0$
and by Fatou's Lemma from (\ref{derivative}) we get the following (notice reversed inequalities due to the sign of $g_n'$):
\begin{align}\nonumber\frac{\textrm{Vol}(S^{N-1})}{(2 \pi)^{N/2}}\int_{0}^{\infty}\lim_{\eps \to 0}g'_n(\eps)r^{N-2}e^{-\frac{r^2}{2}}dr&=\frac{\textrm{Vol}(S^{N-1})}{(2 \pi)^{N/2}}\int_{0}^{\infty}\lim_{\eps \to 0}g'_n(\eps/r)r^{N-2}e^{-\frac{r^2}{2}}dr\\ \nonumber
&\geq\lim_{\eps\to 0}\frac{\textrm{Vol}(S^{N-1})}{(2 \pi)^{N/2}}\int_{0}^{\infty}g'_n(\eps/r)r^{N-2}e^{-\frac{r^2}{2}}dr\\
&=\lim_{\eps\to 0}f_n'(\eps)=-2c_n
 \end{align}
 In particular we have obtained:
 $$\underbrace{\left(\frac{\textrm{Vol}(S^{N-1})}{(2 \pi)^{N/2}}\int_{0}^{\infty}r^{N-2}e^{-\frac{r^2}{2}}dr\right)}_{\frac{\Gamma(\frac{N-1}{2})}{\sqrt{\pi}\Gamma{(\frac{N}{2}})}} \cdot \lim_{\eps\to 0}-g'_n(\eps)\leq 2c_n $$
 which can be rewritten as:
 \begin{equation}\label{finaleq}\lim_{\eps\to 0}-g'_n(\eps)\leq 2c_n \frac{\sqrt{\pi}\Gamma(\frac{N}{2})}{\Gamma{(\frac{N-1}{2}})}=O(nc_n).\end{equation}
 We finally turn to the definition of $\textrm{Vol}(\Sigma)=\lim_{\eps\to 0}\textrm{Vol}(\Sigma_\eps)/2\eps$; plugging the result of Lemma \ref{bound3} into this limit we get:
 $$\textrm{Vol}(\Sigma)\leq \textrm{Vol}(S^{N-1})\cdot \lim_{\eps\to 0}\frac{1-g_n(\eps)}{2\eps}= \textrm{Vol}(S^{N-1})\cdot \lim_{\eps\to 0}-\frac{g'_n(\eps)}{2}\leq \textrm{Vol}(S^{N-1})\cdot O(nc_n)$$
 where in the second equality we have used De l'Hopital's rule and in the last one we used (\ref{finaleq}). This concludes the proof.
\end{proof}

As a corollary we get the following Theorem.

\begin{teo}\label{sqrt}If $n\in 2\mathbb{N}:$
$$\E b(\Sigma_W)\leq O(\sqrt{n})$$

\end{teo}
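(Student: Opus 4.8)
The plan is to turn $\E b(\Sigma_W)$ into an exact integral‑geometric mean value and then feed in Lemma \ref{bound4}. The starting point is that, in Frobenius‑orthonormal coordinates on $\Sym$, a Weyl distributed quadratic form is a standard Gaussian vector in $\R^{N}$; hence two independent Weyl forms $q_1,q_2$ are almost surely linearly independent and $W=\textrm{span}\{q_1,q_2\}$ is a uniformly (i.e.\ $O(N)$‑invariantly) distributed $2$‑plane. Equivalently, the great circle $\widehat{W}=W\cap S^{N-1}$ has the law of $g\cdot S^1$ for a fixed great circle $S^1$ and $g$ Haar distributed in $SO(N)$, so that, using invariance of the Haar measure under $g\mapsto g^{-1}$ and the fact that for a generic $2$‑plane $\Sigma_W$ is a finite set (whence $b(\Sigma_W)=b_0(\Sigma_W)=\#\Sigma_W$), we get $\E b(\Sigma_W)=\E_g\,\#(S^1\cap g\Sigma)$.

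Next I would check that the integral geometry formula recalled in Section 2 genuinely applies to $A=S^1$ (dimension $1$) and $B=\Sigma$ (dimension $N-2$) inside $S^{N-1}$: here $\dim A+\dim B=N-1$, the borderline admissible case, and the generic intersection is $0$‑dimensional. Two transversality facts are used. First, the corank‑$\ge 2$ stratum of $\Sigma$ has codimension at least $3$ in $S^{N-1}$, so for almost every $g$ the $1$‑dimensional circle $g^{-1}S^1$ avoids it and $S^1\cap g\Sigma$ lies in the smooth locus $\Sigma^{\mathrm{sm}}$, along which a Sard‑type argument gives transversality of the intersection for almost every $g$; moreover $\textrm{Vol}(\Sigma)=\textrm{Vol}(\Sigma^{\mathrm{sm}})$ since the bad stratum is lower dimensional. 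Applying the formula to $A$ and $\Sigma^{\mathrm{sm}}$, using $p(S^1)=1$ and $\textrm{Vol}(S^0)=2$, yields
$$\E b(\Sigma_W)=\E_g\,\#(S^1\cap g\Sigma)=2\,p(\Sigma)=\frac{2\,\textrm{Vol}(\Sigma)}{\textrm{Vol}(S^{N-2})}.$$

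It then remains to estimate the right‑hand side. For $n$ even, Lemma \ref{bound4} gives $\textrm{Vol}(\Sigma)\le \textrm{Vol}(S^{N-1})\cdot O(nc_n)$, hence
$$\E b(\Sigma_W)\le \frac{2\,\textrm{Vol}(S^{N-1})}{\textrm{Vol}(S^{N-2})}\cdot O(nc_n)=2\sqrt{\pi}\,\frac{\Gamma\!\left(\tfrac{N-1}{2}\right)}{\Gamma\!\left(\tfrac{N}{2}\right)}\cdot O(nc_n).$$
By Stirling's asymptotic $\Gamma(\tfrac{N-1}{2})/\Gamma(\tfrac{N}{2})=O(N^{-1/2})$, and since $N=\tfrac{1}{2}n(n+1)$ this is $O(n^{-1})$; on the other hand Lemma \ref{bound2} gives $c_n\sim \sqrt{2n}/\pi$, so $nc_n=O(n^{3/2})$. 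Multiplying, $O(n^{-1})\cdot O(n^{3/2})=O(n^{1/2})$, which is exactly the claim $\E b(\Sigma_W)\le O(\sqrt{n})$.

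As for where the difficulty lies: all the analytically substantial work is already packaged into Lemma \ref{bound4} (the Eckart--Young tube estimate together with the Forrester--Witte Painlev\'e‑$V$ description of the GOE gap probability). In the present argument the only point demanding care is the passage from the probabilistic expectation to the integral‑geometric mean value, namely verifying that for almost every group element the circle meets $\Sigma$ transversally and off its singular stratum, so that the intersection number is finite, equals $b_0$, and the smooth integral geometry formula is legitimately invoked; everything else is an elementary manipulation of Gamma functions.
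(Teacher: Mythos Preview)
Your argument is correct and follows essentially the same route as the paper: both reduce $\E b(\Sigma_W)$ to $\dfrac{2\,\textrm{Vol}(\Sigma)}{\textrm{Vol}(S^{N-2})}$ via $O(N)$--invariance of the law of $W$ and the integral geometry formula, then feed in Lemma \ref{bound4} together with the asymptotics for $c_n$ and the sphere--volume ratio. Your write-up is in fact a bit more scrupulous than the paper's about the transversality hypotheses (restricting to the smooth locus of $\Sigma$ and noting that the corank $\geq 2$ stratum has codimension at least $3$), but the substance is the same.
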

\begin{proof}
We start by noticing that by assumption for every $g\in SO(N)$ the random quadratic forms $q$ and $gq$ have the same distribution (here the action is not by change of variable, but directly on the space of the coefficients). Thus we have:
$$\mathbb{E}b(\Sigma_W)=\frac{\int_{SO(N)}\mathbb{E}[b(\Sigma_{gW})]dg}{\textrm{Vol(SO(N))}}=\frac{\mathbb{E}\int_{SO(N)}b(\Sigma_{gW})dg}{\textrm{Vol}(SO(N))}=\frac{2\textrm{Vol}(\Sigma)}{\textrm{Vol}(S^{N-2})}.$$
The first equality is because for every $g\in SO(N)$ we have $\mathbb{E}b(\Sigma_{gW})=\mathbb{E}b(\Sigma_{W})$; the second is just linearity of expectation, and the third one is the integral geometry formula (there is no expected value because the integral is constant).\\
Using now the bound given by Lemma \ref{bound4} we get:
$$\E b(\Sigma_W)\leq \frac{\textrm{Vol}(S^{N-1})}{\textrm{Vol}(S^{N-2})}O(nc_n).$$
Recalling the formula for the volume of the sphere $\textrm{Vol}(S^{k-1})=\frac{2\pi^{k/2}}{\Gamma(k/2)},$ we see that $$\frac{\textrm{Vol}(S^{N-1})}{\textrm{Vol}(S^{N-2})}\sim\frac{\sqrt{2}}{n \pi}.$$
This, together with the asymptotic $c_n\sim \frac{\sqrt{2n}}{ \pi}$ from Lemma \ref{bound2}, concludes the proof.
\end{proof}

\begin{propo}\label{p1}
$$\lim_{n\to \infty}\frac{\mathbb{E}[4\mu_{n}]}{n}=2.$$

\begin{proof}
We start by noticing that for an open dense set of $(Q_1, Q_2)$ the following inequalities hold:
$$\ii^+(Q_1)\leq\mu_{W}\leq \ii^+(Q_1)+b(\Sigma_W).$$
In fact for a generic pencil the index function ``jumps'' exactly by $\pm1$ when crosses $\Sigma_W$ and thus the maximum that can reach over $W$ is $\mu(Q_1)+b(\Sigma_W).$ Dividing by $n$ and taking expectations, Theorem \ref{sqrt} gives the result for $n\in 2\mathbb{N}.$
To prove that the statement holds also for odd $n$ we notice that restricting a Weyl distributed random quadratic form $q$ to a subspace $V\subset \R^{n}$ gives again a Weyl distributed random quadratic form $q|_V$ on $V\simeq \mathbb{R}^{\textrm{dim}(V)}$; since $\ii^+(q|_V)\leq \ii^+(q)$ we have:
$$\mathbb{E}[\mu_{n-1}]\leq \mathbb{E}[\mu_n]\leq \mathbb{E}[\mu_{n+1}].$$
This proves that the same limit holds for odd $n.$

\end{proof}
\end{propo}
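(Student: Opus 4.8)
The plan is to control $\mu_W$ by means of a single form of the pencil together with the size of the spectral variety: for a generic pencil $W=\textrm{span}\{q_1,q_2\}$ I would establish
$$\ii^+(Q_1)\leq\mu_W\leq\ii^+(Q_1)+b(\Sigma_W),$$
so that, after dividing by $n$ and taking expectations, the correction term disappears in the limit thanks to the sublinear bound $\E[b(\Sigma_W)]=O(\sqrt n)$ of Theorem~\ref{sqrt}, while $\E[\ii^+(Q_1)]/n\to 1/2$ by Wigner's semicircular law. The parity restriction $n\in2\N$ inherited from Theorem~\ref{sqrt} would then be removed by a monotonicity argument.

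\emph{The sandwich.} The left inequality is trivial since $q_1\in W$. For the right one I would invoke the genericity facts recalled in Section~3: for $(q_1,q_2)$ in an open dense set the space $W$ is two--dimensional and meets $\Sigma$ transversally along each of its strata, so $\Sigma_W\subset S^1$ is a finite set of points, at each of which the corresponding quadratic form has a one--dimensional kernel. By transversality, near such a point exactly one eigenvalue crosses $0$ simply, so the locally constant function $\ii^+$ on $S^1\setminus\Sigma_W$ changes by exactly $\pm1$ across each point of $\Sigma_W$. Traversing $S^1$ once starting from $Q_1$ one then reaches every value of $\ii^+$ within $\#\Sigma_W$ unit steps of $\ii^+(Q_1)$, and since $\Sigma_W$ is a finite set of points $\#\Sigma_W=b(\Sigma_W)$; this yields $\mu_W\leq\ii^+(Q_1)+b(\Sigma_W)$.

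\emph{Passing to expectations.} From $0\leq\mu_W-\ii^+(Q_1)\leq b(\Sigma_W)$ I obtain, for $n$ even,
$$\frac{\E[\ii^+(Q_1)]}{n}\leq\frac{\E[\mu_W]}{n}\leq\frac{\E[\ii^+(Q_1)]}{n}+\frac{O(\sqrt n)}{n},$$
the last term by Theorem~\ref{sqrt}. Since $q_1$ is Weyl distributed, $Q_1\in\textrm{GOE}(n)$ and a.s. $\ii^+(Q_1)+\ii^-(Q_1)=n$; combining this with the fact (proved above via Wigner's Theorem~\ref{sc} and the symmetry of $\tau_{sc}$ about the origin) that $\E[|\ii^+(Q_1)-\ii^-(Q_1)|]/n\to0$, I get $\E[\ii^+(Q_1)]/n\to1/2$. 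Hence $\E[\mu_W]/n\to1/2$, i.e. $\E[4\mu_W]/n\to2$, for $n\in2\N$. For odd $n$ I would restrict a Weyl form to a linear subspace, which is again Weyl, and use $\ii^+(q|_V)\leq\ii^+(q)$: applied to both forms of a random pencil this gives $\E[\mu_{n-1}]\leq\E[\mu_n]\leq\E[\mu_{n+1}]$, where $\mu_m$ denotes the pencil quantity in $m$ variables, and since $(n\pm1)/n\to1$ the even case squeezes the odd one to the same limit.

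\emph{Main obstacle.} The step that needs real care is the right--hand inequality of the sandwich: the assertion that along a generic pencil $\ii^+$ has no jumps away from $\Sigma_W$ and exactly a $\pm1$ jump at each point of $\Sigma_W$. This rests on the transversality of $W$ to the corank stratification of $\Sigma$ (generic by Thom's transversality theorem) together with the local normal form of a quadratic form near a simple degeneracy. Everything else is linearity of expectation plus the two external inputs Theorem~\ref{sqrt} and Wigner's law.
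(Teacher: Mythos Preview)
Your proposal is correct and follows essentially the same route as the paper: the same sandwich $\ii^+(Q_1)\leq\mu_W\leq\ii^+(Q_1)+b(\Sigma_W)$ justified by the $\pm1$ jumps of the index along a generic pencil, the same appeal to Theorem~\ref{sqrt} for the $O(\sqrt n)$ correction, and the same monotonicity-by-restriction argument to pass from even to odd $n$. One small simplification you could make: since $Q_1$ and $-Q_1$ are identically distributed in the GOE and almost surely nonsingular, $\E[\ii^+(Q_1)]=\E[\ii^-(Q_1)]=n/2$ exactly, so Wigner's law is not needed for this step.
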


As a corollary we prove the following theorem for the asymptotic of $\mathbb{E}[b(X_\R)]$.

\begin{teo}
Let $X_\R\subset \RP^n$ be the intersection of two random quadrics independent and Weyl distributed. Then
$$\lim_{n\to \infty}\frac{\mathbb{E}[b(X_\R)]}{n}=1, \quad \textrm{$n$ odd}.$$

\begin{proof}
The limit follows from formula (\ref{extwo}), Theorem \ref{sqrt} and the previous proposition, after noticing that $\mathbb{E}[c_W+d_W]\leq 2$.\\
\end{proof}
\end{teo}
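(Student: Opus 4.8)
The plan is to assemble the asymptotic for $\mathbb{E}[b(X_\R)]$ directly from the exact formula (\ref{extwo}), which expresses $\mathbb{E}[b(X_\R)]$ as $3n-1-4\mathbb{E}[\mu_W]+\mathbb{E}[c_W+d_W]+\frac{1}{2}\mathbb{E}b(\Sigma_W)$. Dividing this identity by $n$, I would control each of the four terms separately. The constant $3n-1$ clearly contributes $3$ in the limit. The term $\mathbb{E}[c_W+d_W]/n$ vanishes because, by the previous proposition, $c_W$ and $d_W$ each lie in $\{0,1\}$, so $\mathbb{E}[c_W+d_W]\leq 2$ and the ratio is $O(1/n)$. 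The term $\frac{1}{2}\mathbb{E}b(\Sigma_W)/n$ vanishes because Theorem \ref{sqrt} gives $\mathbb{E}b(\Sigma_W)\leq O(\sqrt n)$, hence the ratio is $O(1/\sqrt n)$; note this is where the hypothesis $n\in 2\mathbb{N}$ enters, which is why the statement is phrased for $n$ odd via a separate monotonicity argument below.

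The only term requiring genuine input is $4\mathbb{E}[\mu_W]/n$, and here I would invoke Proposition \ref{p1}, which asserts $\lim_{n\to\infty}4\mathbb{E}[\mu_n]/n=2$. The subtlety is that (\ref{extwo}) features $\mu_W=\max\ii^+|_W$ over the pencil $W=\operatorname{span}\{q_1,q_2\}$, not the single-matrix quantity $\mu_n$; but Proposition \ref{p1} is precisely the statement that these two have the same normalized limit, the comparison being controlled again by $b(\Sigma_W)=O(\sqrt n)$ in expectation via the sandwich $\ii^+(Q_1)\leq\mu_W\leq\ii^+(Q_1)+b(\Sigma_W)$. Combining: $\mathbb{E}[b(X_\R)]/n\to 3-2+0+0=1$, which is the desired limit for even $n$.

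To pass from even to odd $n$ I would use the restriction principle already exploited in the proof of Proposition \ref{p1}: restricting a Weyl-distributed pencil on $\R^{n}$ to a hyperplane $\R^{n-1}$ yields a Weyl-distributed pencil, and one should check that the corresponding total Betti numbers are monotone, giving a squeeze $\mathbb{E}[b(X_\R^{(n-1)})]/(n-1)\cdot\frac{n-1}{n}\leq \mathbb{E}[b(X_\R^{(n)})]/n\leq \mathbb{E}[b(X_\R^{(n+1)})]/(n+1)\cdot\frac{n+1}{n}$ with both outer terms along even indices tending to $1$. Alternatively, and perhaps more cleanly, one can simply note that every term on the right-hand side of (\ref{extwo}) other than $4\mathbb{E}[\mu_W]$ is either a constant or controlled by general (parity-independent) bounds, and that the odd-$n$ case of the $\mu_W$ asymptotic is already established inside Proposition \ref{p1}; the only genuinely even-$n$ ingredient is Theorem \ref{sqrt}, whose odd-$n$ analogue (an $O(\sqrt n)$ bound) follows from the same restriction monotonicity applied to $b(\Sigma_W)$.

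The main obstacle I anticipate is not in this final assembly — which is essentially bookkeeping once Proposition \ref{p1} and Theorem \ref{sqrt} are in hand — but rather in making sure the parity issue is handled without circularity: Theorem \ref{sqrt} is stated only for even $n$, and (\ref{extwo}) is a pointwise generic identity valid for all $n$, so the cleanest route is to prove the even case by direct substitution and then deduce the odd case purely by the restriction/monotonicity argument, exactly as the excerpt's proof does by simply citing "formula (\ref{extwo}), Theorem \ref{sqrt} and the previous proposition, after noticing that $\mathbb{E}[c_W+d_W]\leq 2$."
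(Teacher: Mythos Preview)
Your core assembly is exactly the paper's proof: divide (\ref{extwo}) by $n$, invoke Proposition~\ref{p1} for the $\mu_W$ term, Theorem~\ref{sqrt} for the $\Sigma_W$ term, and the trivial bound $c_W+d_W\le 2$, obtaining $3-2+0+0=1$.

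The parity discussion, however, rests on a notational slip. In the final theorem $X_\R\subset\RP^n$, so the quadratic forms live on $\R^{n+1}$; the ``$n$'' appearing in formula~(\ref{extwo}), Theorem~\ref{sqrt} and Proposition~\ref{p1} is the number of variables, here $n+1$. Thus ``$n$ odd'' in the theorem means the number of variables is \emph{even}, which is precisely the hypothesis $n\in 2\mathbb{N}$ of Theorem~\ref{sqrt}. Your direct substitution therefore already proves exactly the case that is claimed, and nothing remains to extend; the paper's one-line proof reflects this. The proposed squeeze via monotonicity of $b(X_\R)$ under hyperplane restriction is both unnecessary and, as written, unjustified: monotonicity of the total Betti number of an intersection of quadrics under restriction to a hyperplane is not obvious and is nowhere established in the paper (what \emph{is} monotone under restriction is $\ii^+$, hence $\mu_W$, which is what Proposition~\ref{p1} actually uses).
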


\begin{remark}Notice in particular that since the total Betti number of the complete intersection of two quadrics in $\CP^n$ is $2n-2$, then in this case the expectation of Smith's inequality is turned into an equality for large $n$ (up to a factor $\frac{1}{2}$).
\end{remark}

\section*{Appendix: the expected second differential}
It is interesting now to compute also the expected value of the number $c_W+d_W$. By definition we have:
$$c_W=\textrm{rk}(E_3^{0, \mu})\quad \textrm{and}\quad d_W=\textrm{rk}(E_3^{2, \mu-1})$$
where $(E_r, d_r)_{r\geq 0}$ is the spectral sequence of theorem \ref{AgLe} and $\mu=\mu_W=\max \ii^+|_W.$ We recall now from \cite{AgLe} the definition of the second differential of this spectral sequence.  Consider the bundle $L_{\mu}\to \Omega^{\mu}$ whose fiber at the point $q\in \Omega^{\mu}$ is the positive eigenspace of $Q$ and whose vector bundle structure is given by its inclusion in $\Omega^{\mu}\times \R^{n+1}.$ We let $w_{1,\mu}\in H^{1}(\Omega^{\mu})$ be the first Stiefel-Whitney class of this bundle. From Theorem B of \cite{AgLe} it follows that for every $x\in E_2^{0,\mu}$ we have:
$$d_2^{0, \mu}(x)=(x\smile w_{1, \mu})|_{(W, \Omega^j)}.$$
In particular, since $E_3^{0, \mu}=\ker d_2^{0,\mu}$ and $E_3^{2, \mu-1}=H^1(\Omega^\mu)/ \textrm{Im} d_2^{0, \mu}$ we immediately get:
\begin{equation}\label{whit}c_W+d_W=1+b_1(\Omega^\mu) -2 \omega_{1, \mu},\end{equation}
where $\omega_{1,\mu}$ is $\textrm{rk}(d_2^{0, \mu})$ (thus $\omega_{1, \mu}$ ``is" the Stiefel-Whitney class $w_{1, \mu}$ thought as an element of $H^1(\Omega^\mu)\subset \Z_2$). Using this description we prove the following.

\begin{propo}For two Weyl, independent random quadrics in $\Q(n)$ we have
$$\mathbb{E}[c_W+d_W]=1+(-1)^{[\frac{n}{2}]}\mathbb{P}\bigg\{\ii^+|_{W\backslash \{0\}}\equiv \bigg[ \frac{n}{2} \bigg]\bigg\}.$$
\begin{proof}
In the case $n$ is odd, for a generic pair of quadrics $(q_1, q_2)$ the group $H^1(\Omega^\mu)$ has to be zero: this is because any generic linear family of quadrics in an odd number of variables contains at least a line of degenerate quadrics and thus the index function cannot be constant on the nonzero elements of the family. Thus $w_{1,\mu}=0$ and equation \ref{whit} gives the desired conclusion in this case.\\
In the case $n$ is even we use the following fact (see Proposition 2 of \cite{Agrachev1}): for a generic pair of symmetric matrices $(Q_1, Q_2)$ there exists an invertible matrix $M$ such that both $M^TQ_1M$ and $M^TQ_2M$ have the same block-diagonal shape with blocks of dimensions one or two. In particular the index function for the family $x_1Q_1+x_2Q_2$ is the sum of the index functions for the families $x_1B_1^k+x_2B_2^k$ (because the number of positive eigenvalues of a symmetric matrix is invariant by congruence), where $M^TQ_iM=\textrm{diag}(B_i^1, \ldots, B_i^m)$. Let us focus on the term $b_1(\Omega^\mu)$ in equation (\ref{whit}). Notice that
$$\mathbb{E}[b_1(\Omega^\mu)]=\mathbb{P}\bigg\{\ii^+|_{W\backslash \{0\}}\equiv \bigg[ \frac{n}{2} \bigg]\bigg\}.$$ This is because the only case in which $b_1(\Omega^\mu)$ is nonzero, for a generic pair, is when the index function is constant on the nonzero elements of $W$, and for a generic pair this constant has to be $\frac{n}{2}.$ On the other hand using the previous observation, we se that the only way for the index function to be constant on $W\backslash\{0\}$, for a generic pair, is when each block has dimension two and the index function for each block is constantly equal to one. It is a well-known result that the bundle of positive eigenspace for a two dimensional family of quadrics in two variables equals the Moebius bundle (see \cite{Le2}), hence for every block the corresponding Stiefel-Whitney class is nonzero. Thus it follows that for a generic pair $(Q_1, Q_2)$, if the index function is constant on $W\backslash\{0\}$, then it must be equal $\frac{n}{2}$ and by the Whitney product formula in this case:
$$w_{1, \mu}\equiv\frac{n}{2}\textrm{ mod }2.$$
Thus $b_1(\Omega^\mu)$ equals $1$ with probability $p_1=\mathbb{P}\bigg\{\ii^+|_{W\backslash \{0\}}\equiv \bigg[ \frac{n}{2} \bigg]\bigg\}$ and zero otherwise (both for the even and the odd case); when $n$ is even, $\omega_{1,\mu}$ equals $\frac{n}{2}$ modulo 2 i.e. $\frac{1}{2}(1+(-1)^{[\frac{n}{2}]+1})$ with probability $p_1$, and zero otherwise. Using equation (\ref{whit}) and the definition of expectation we immediately get the conclusion.

\end{proof}\end{propo}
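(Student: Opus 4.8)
The plan is to take expectations in the identity (\ref{whit}), $c_W+d_W=1+b_1(\Omega^\mu)-2\,\omega_{1,\mu}$, which holds for a generic pair $(q_1,q_2)$, and to compute $\mathbb{E}[b_1(\Omega^\mu)]$ and $\mathbb{E}[\omega_{1,\mu}]$ by pinning down, for a generic pencil, exactly when these two $\{0,1\}$-valued quantities are nonzero. The non-generic pairs form a proper algebraic subset of the space of pairs $(q_1,q_2)$, hence a null set, so they affect no expectation.

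First I would analyze the topology of $\Omega^\mu=\{q\in W\backslash\{0\}:\ii^+(q)=\mu_W\}$, viewed inside $S^1$. Off $\Sigma_W$ the index function is locally constant, and for a generic pencil it jumps by exactly $\pm1$ across each of the finitely many points of $\Sigma_W$; hence $\{\ii^+=\mu_W\}$ is an open subset of $S^1$, equal to $S^1$ if and only if $\ii^+$ is constant on $W\backslash\{0\}$, which (generically) is equivalent to $\Sigma_W=\emptyset$. When $\Sigma_W\neq\emptyset$, $\Omega^\mu$ is a finite disjoint union of open arcs, so $b_1(\Omega^\mu)=0$ and the bundle $L_\mu$ over it is trivial, giving $\omega_{1,\mu}=0$. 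When $\Sigma_W=\emptyset$, $\Omega^\mu=S^1$ and $b_1(\Omega^\mu)=1$; moreover every element of $W\backslash\{0\}$ is then nonsingular, so $\ii^+(q)+\ii^+(-q)=n$ forces the constant value of $\ii^+$ to be $n/2$. In particular $\Sigma_W=\emptyset$ is impossible for $n$ odd, and for $n$ even it is precisely the event of probability $p_1:=\mathbb{P}\{\ii^+|_{W\backslash\{0\}}\equiv[n/2]\}$. Thus $\mathbb{E}[b_1(\Omega^\mu)]=p_1$, and for $n$ odd we already get $\mathbb{E}[c_W+d_W]=1$, consistent with the asserted formula (in which $p_1=0$).

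It remains, for $n$ even, to compute $\omega_{1,\mu}$ on the event $\Sigma_W=\emptyset$. Here I would use Agrachev's simultaneous normal form (Proposition 2 of \cite{Agrachev1}): generically there is an invertible $M$ with $M^TQ_iM=\mathrm{diag}(B_i^1,\dots,B_i^m)$, every block of size one or two. Since $\ii^+$ is a congruence invariant, the index function of the pencil is the sum of those of the block-pencils $x_1B_1^k+x_2B_2^k$; by the parity argument above it is identically $n/2$ only when every block-pencil is nonsingular with index identically $1$, which forces all blocks to be $2\times2$, so $m=n/2$. Then $L_\mu$ splits as the Whitney sum of the $n/2$ positive-eigenspace line bundles of the $2\times2$ block-pencils, each of which is the Moebius bundle over $S^1$ (see \cite{Le2}), so the Whitney product formula gives $w_{1,\mu}\equiv n/2\pmod 2$, i.e. $\omega_{1,\mu}=\tfrac12\big(1+(-1)^{[n/2]+1}\big)$ on this event and $0$ off it; hence $\mathbb{E}[\omega_{1,\mu}]=\tfrac12\big(1+(-1)^{[n/2]+1}\big)p_1$. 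Substituting into the expectation of (\ref{whit}) and using $1-\big(1+(-1)^{[n/2]+1}\big)=(-1)^{[n/2]}$ yields $\mathbb{E}[c_W+d_W]=1+(-1)^{[n/2]}p_1$.

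The step I expect to be the main obstacle is not any single computation but the genericity bookkeeping that underpins everything: one has to establish, off a null set, that the index function really jumps by exactly $\pm1$ across each point of $\Sigma_W$ (so that $b_1(\Omega^\mu)\in\{0,1\}$ with the stated characterization), that the Agrachev normal form applies, and that on $\{\Sigma_W=\emptyset\}$ the splitting of $L_\mu$ into Moebius line bundles is the correct one. Granted these, the rest is linearity of expectation together with the parity arithmetic above.
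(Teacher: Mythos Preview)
Your proposal is correct and follows essentially the same route as the paper's proof: both take expectations in (\ref{whit}), characterize the event $b_1(\Omega^\mu)=1$ as ``the index function is constant on $W\backslash\{0\}$'' (forcing the constant value $n/2$, hence $p_1=0$ for odd $n$), and for even $n$ invoke Agrachev's simultaneous block diagonalization plus the Moebius-bundle identification to compute $w_{1,\mu}\equiv n/2\pmod 2$ via the Whitney product formula. Your phrasing of the dichotomy in terms of $\Sigma_W=\emptyset$ versus $\Sigma_W\neq\emptyset$, and your direct parity argument $\ii^+(q)+\ii^+(-q)=n$, are slight streamlinings, but the substance is the same.
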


In the case $W$ is two dimensional, by Theorem \ref{sqrt} we have:
\begin{equation}\label{p11}\lim_{n\to \infty}\mathbb{P}\bigg\{\ii^+|_{W\backslash \{0\}}\equiv \bigg[\frac{n}{2}\bigg]\bigg\}=0.\end{equation}
Hence $\mathbb{E}[c_W+d_W]\to 1;$ notice also that the previous statement also gives a probabilistic statement on the second differential of the spectral sequence of Theorem \ref{AgLe}; in fact using the limit (\ref{p11}) we immediately derive the following.

\begin{coro} For the intersection of two independent, Weyl, random quadrics in $\RP^n$ we have: 
$$\lim_{n\to \infty}\E[ \emph{\textrm{rk}}(d_2)]=0.$$

\end{coro}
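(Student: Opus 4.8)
The plan is to extract the statement $\lim_{n\to\infty}\E[\textrm{rk}(d_2)]=0$ directly from the material already in hand, treating it as a corollary of equation (\ref{whit}), the preceding proposition computing $\E[c_W+d_W]$, and the limit (\ref{p11}). First I would recall that for a generic pair $(q_1,q_2)$ the only differential in the spectral sequence of Theorem \ref{AgLe} that can be nonzero is $d_2^{0,\mu}$ (all other entries on the $E_2$-page are forced to die for dimension reasons in codimension two, or contribute only the $c_W,d_W$ terms already accounted for), so that $\textrm{rk}(d_2)=\omega_{1,\mu}$ in the notation of the Appendix, where $\omega_{1,\mu}\in\{0,1\}$ is the rank of $d_2^{0,\mu}$. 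Hence $\E[\textrm{rk}(d_2)]=\E[\omega_{1,\mu}]=\PP\{\omega_{1,\mu}=1\}$.

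Next I would argue that the event $\{\omega_{1,\mu}=1\}$ is contained in the event $\{\,\ii^+|_{W\backslash\{0\}}\equiv[\tfrac n2]\,\}$. Indeed, by the analysis in the proof of the previous proposition, for a generic pencil the class $w_{1,\mu}$ is nonzero only when the index function is constant on $W\backslash\{0\}$: when the index function jumps, $\Omega^\mu$ is (after the deformation retraction) a proper arc of $S^1$, hence contractible, so $H^1(\Omega^\mu)=0$ and $d_2^{0,\mu}=0$. Therefore
$$\PP\{\textrm{rk}(d_2)=1\}\leq \PP\bigg\{\ii^+|_{W\backslash\{0\}}\equiv\bigg[\frac n2\bigg]\bigg\}.$$
Now I would invoke (\ref{p11}), whose right-hand side is $0$ in the limit: this itself follows from Theorem \ref{sqrt}, since the event that the index is constant forces $\Sigma_W=\emptyset$, whereas $\E\,b(\Sigma_W)$, being $O(\sqrt n)/n\to 0$ after the relevant normalization, makes the probability of an empty spectral variety — no, more precisely, one uses that a constant index function is incompatible with a nonempty $\Sigma_W$ while Proposition \ref{p1} shows $\mu_W/n\to 1/2$ typically only through fluctuations controlled by $b(\Sigma_W)$, forcing the constancy event to have vanishing probability. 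Combining the two displays gives $\lim_{n}\PP\{\textrm{rk}(d_2)=1\}=0$, and since $\textrm{rk}(d_2)$ takes values in $\{0,1\}$ this is exactly $\lim_n\E[\textrm{rk}(d_2)]=0$.

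The only genuinely delicate point is the reduction $\textrm{rk}(d_2)=\omega_{1,\mu}$: one must be sure that in the two-quadric case the full differential $d_2$ on the $E_2$-page has no contribution beyond $d_2^{0,\mu}$, i.e. that every other bidegree either vanishes on $E_2$ or carries a zero map into/out of it. This is implicit in the proof of the Proposition computing $b(X_\R)$ (where only $E_3^{0,\mu}$ and $E_3^{2,\mu-1}$ survive as potentially modified terms), but I would state it explicitly, citing \cite{AgLe} Theorem B for the identification of $d_2^{0,\mu}$ with cup product by $w_{1,\mu}$ and the first-quadrant shape of the sequence for the vanishing of the rest. Everything else is a one-line combination of results already proved; no new estimates are needed.
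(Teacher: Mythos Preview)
Your proposal is correct and follows essentially the same route as the paper: the paper's entire argument is the single clause ``using the limit (\ref{p11}) we immediately derive the following,'' and you have simply unpacked what ``immediately'' means --- namely that $\textrm{rk}(d_2)=\omega_{1,\mu}$, that $\{\omega_{1,\mu}=1\}$ is contained in the constancy event, and that (\ref{p11}) kills the probability of that event. Your identification of the one delicate point (that no other component of $d_2$ contributes in the two-quadric case) is apt and your justification via the shape of the $E_2$-page is exactly right; the muddled mid-paragraph attempt to re-derive (\ref{p11}) from Theorem \ref{sqrt} is unnecessary, since the paper has already asserted (\ref{p11}), and you should simply cite it rather than re-argue it.
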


\end{document}